\newtheorem{theorem}{Theorem}[section]
\newtheorem{lemma}[theorem]{Lemma}
\numberwithin{equation}{section}
\begin{document}
\setcounter{page}{0}
\thispagestyle{empty}

\begin{center}
{\Large \bf Analysis of the Expected Number of Bit Comparisons\\
Required by Quickselect \\}
\normalsize

\vspace{4ex}
{\sc James Allen Fill\footnotemark} \\
\vspace{.1in}
Department of Applied Mathematics and Statistics \\
\vspace{.1in}
The Johns Hopkins University \\
\vspace{.1in}
{\tt jimfill@jhu.edu} and {\tt http://www.ams.jhu.edu/\~{}fill/} \\
\vspace{.2in}
{\sc and} \\
{\sc Take Nakama}\\
\vspace{.1in}
Department of Applied Mathematics and Statistics \\
\vspace{.1in}
The Johns Hopkins University \\
\vspace{.1in}
{\tt nakama@jhu.edu} and {\tt http://www.ams.jhu.edu/\~{}nakama/} \\
\end{center}
\vspace{3ex}

\begin{center}
{\sl ABSTRACT} \\
\end{center}

When algorithms for sorting and searching
are applied to keys that are represented as bit strings, we can
quantify the performance of the algorithms not only in terms of the
number of key comparisons required by the algorithms but also in
terms of the number of bit comparisons. Some of the standard sorting
and searching algorithms have been analyzed with respect to key
comparisons but not with respect to bit comparisons. In this paper,
we investigate the expected number of bit comparisons required by
{\tt Quickselect} (also known as {\tt Find}). We develop exact and asymptotic
formulae for the expected number of bit comparisons required to find
the smallest or largest key by {\tt Quickselect} and show that the
expectation is asymptotically linear with respect to the number of
keys. Similar results are obtained for the average case.  For
finding keys of arbitrary rank, we derive an exact formula for the
expected number of bit comparisons that (using rational arithmetic)
requires only finite summation (rather than such operations as
numerical integration) and use it to compute the expectation for
each target rank.

\bigskip
\bigskip

\begin{small}

\par\noindent
{\em AMS\/} 2000 {\em subject classifications.\/}  Primary 68W40;
secondary 68P10, 60C05.
%
\medskip
\par\noindent
{\em Key words and phrases.\/}
{\tt Quickselect}, {\tt Find}, searching algorithms, asymptotics, average-case analysis, key comparisons, bit comparisons.
\medskip
\par\noindent
\emph{Date.} June~15, 2007.
\end{small}

\footnotetext[1]{Research for both authors supported by NSF grant DMS--0406104,
and by The Johns Hopkins University's Acheson J.~Duncan Fund for the
Advancement of Research in Statistics.}

\newpage

\section{Introduction and Summary}\label{introduction}
\setlength\parindent{.5in}
When an algorithm for sorting or searching is analyzed, the
algorithm is usually regarded either as comparing keys pairwise
irrespective of the keys' internal structure or as operating on
representations (such as bit strings) of keys.  In the former case,
analyses often quantify the performance of the algorithm in terms of
the number of key comparisons required to accomplish the task;
{\tt Quickselect} (also known as {\tt Find}) is an example of those algorithms
that have been studied from this point of view.  In the latter case,
if keys are represented as bit strings, then analyses quantify the
performance of the algorithm in terms of the number of bits compared
until it completes its task.  Digital search trees, for example,
have been examined from this
perspective.\\
\indent     In order to fully quantify the performance of a sorting
or searching algorithm and enable comparison between key-based and
digital algorithms, it is ideal to analyze the algorithm from both
points of view. However, to date, only {\tt Quicksort} has been analyzed
with both approaches; see Fill and Janson \cite{fj2004}. Before
their study, {\tt Quicksort} had been extensively examined with regard to
the number of key comparisons performed by the algorithm (e.g.,
Knuth \cite{k2002v3}, R\'{e}gnier \cite{r1989}, R\"{o}sler
\cite{r1991}, Knessl and Szpankowski \cite{ks1999}, Fill and Janson
\cite{fj2002}, Neininger and R\"{u}schendorf \cite{nr2002}), but it
had not been examined with regard to the number of bit comparisons
in sorting keys represented as bit strings. In their study, Fill and
Janson assumed that keys are independently and uniformly distributed
over (0,1) and that the keys are represented as bit strings. [They
also conducted the analysis for a general absolutely continuous
distribution over (0,1).]  They showed that the expected number of
bit comparisons required to sort $n$ keys is asymptotically
equivalent to $n(\ln n)(\lg n)$ as compared to the lead-order term
of the expected number of \emph{key} comparisons, which is
asymptotically $2n \ln n$. We use ln and lg to denote natural and
binary logarithms, respectively, and use log when the base does not
matter (for example, in remainder estimates).\\
\indent     In this paper, we investigate the expected number of bit
comparisons required by {\tt Quickselect}. Hoare \cite{h1961} introduced
this search algorithm, which is treated in most textbooks on
algorithms and data structures. {\tt Quickselect} selects the $m$-th
smallest key (we call it the rank-$m$ key) from a set of $n$
distinct keys. (The keys are typically assumed to be distinct, but
the algorithm still works---with a minor adjustment---even if they
are not distinct.) The algorithm finds the target key in a recursive
and random fashion. First, it selects a pivot uniformly at random
from $n$ keys.  Let $k$ denote the rank of the pivot.  If $k=m$,
then the algorithm returns the pivot.  If $k > m$, then the
algorithm recursively operates on the set of keys smaller than the
pivot and returns the rank-$m$ key. Similarly, if $k < m$, then the
algorithm recursively operates on the set of keys larger than the
pivot and returns the ($k-m$)-th smallest key from the subset.
Although previous studies (e.g., Knuth \cite{k1972}, Mahmoud
\textit{et al.} \cite{mms1995}, Gr\"{u}bel and U. R\"{o}sler
\cite{gr1996}, Lend and Mahmoud \cite{lm1996}, Mahmoud and Smythe
\cite{ms1998}, Devroye \cite{d2001}, Hwang and Tsai \cite{ht2002})
examined {\tt Quickselect} with regard to key comparisons, this study is
the first to analyze
the bit complexity of the algorithm.\\
\indent We suppose that the algorithm is applied to $n$ distinct
keys that are represented as bit strings and that the algorithm
operates on individual bits in order to find a target key. We also
assume that the $n$ keys are uniformly and independently distributed
in $(0, 1)$. For instance, consider applying {\tt Quickselect} to find the
smallest key among three keys $k_1$, $k_2$, and $k_3$ whose binary
representations are .01001100..., .00110101..., and .00101010...,
respectively. If the algorithm selects $k_3$ as a pivot, then it
compares each of $k_1$ and $k_2$ to $k_3$ in order to determine the
rank of $k_3$.  When $k_1$ and $k_3$ are compared, the algorithm
requires 2 bit comparisons to determine that $k_3$ is smaller than
$k_1$ because the two keys have the same first digit and differ at
the second digit. Similarly, when $k_2$ and $k_3$ are compared, the
algorithm requires 4 bit comparisons to determine that $k_3$ is
smaller than $k_2$.  After these comparisons, key $k_3$ has been
identified as smallest. Hence the search for the smallest key
requires a total of 6 bit comparisons (resulting from the two key
comparisons).\\
\indent     We let $\mu(m,n)$ denote the expected number of bit
comparisons required to find the rank-$m$ key in a file of $n$ keys
by {\tt Quickselect}. By symmetry, $\mu(m,n)=\mu(n+1-m,n)$. First, we
develop exact and asymptotic formulae for $\mu(1,n)=\mu(n,n)$, the
expected number of bit comparisons required to find the smallest key
by {\tt Quickselect}, as summarized in the following theorem.
\begin{theorem}\label{thmForMu1}
The expected number $\mu (1,n)$ of bit comparisons required by
{\tt Quickselect} to find the smallest key in a file of $n$ keys that are
independently and uniformly distributed in $(0,1)$ has the following
exact and asymptotic expressions:
\begin{eqnarray}
    \mu (1,n) &=& 2n(H_n-1) + 2
    \sum_{j=2}^{n-1}B_j\frac{n-j+1-{{n}\choose{j}}}{j(j-1)(1-2^{-j})}\nonumber\\
    &=& cn-\frac{1}{\ln2}(\ln n)^2-\left(\frac{2}{\ln 2} +
1\right)\ln n + O(1),\nonumber
\end{eqnarray}
where $H_n$ and $B_j$ denote harmonic and Bernoulli numbers,
respectively, and, with $\chi_k := \frac{2 \pi i k}{\ln 2}$ and
$\gamma:=\text{Euler's constant} \doteq 0.57722$, we define
\begin{eqnarray*}
    c := \frac{28}{9} + \frac{17-6\gamma}{9\ln 2} - \frac{4}{\ln
2}\sum_{k \in
\mathbb{Z}\backslash\{0\}}\frac{\zeta(1-\chi_k)\Gamma(1-\chi_k)}{\Gamma(4-\chi_k)(1-\chi_k)}\doteq5.27938.
\end{eqnarray*}
\end{theorem}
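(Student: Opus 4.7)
The plan is to decompose $\mu(1,n)$ as a double sum indexed by pairs of order statistics and by dyadic refinement level, evaluate the inner sums to obtain the exact formula, and then extract the asymptotics by a Mellin/Rice contour argument. A standard Quickselect argument shows that, when searching for the rank-$1$ key, two order statistics $U_{(i)}$ and $U_{(j)}$ (with $1 \le i < j \le n$) are compared iff one of them is chosen as pivot before any other member of $\{U_{(1)},\ldots,U_{(j)}\}$; this event has probability $2/j$ and, since pivot selection is governed by an independent uniform random permutation, it is independent of the key values. Writing $\beta(u,v)$ for the number of bit comparisons used to separate $u \ne v$ bitwise,
\begin{equation*}
    \mu(1,n) \;=\; \sum_{1 \le i < j \le n} \frac{2}{j}\, E\bigl[\beta(U_{(i)},U_{(j)})\bigr].
\end{equation*}
Using the identity $\beta(u,v) = 1 + \sum_{k \ge 1}\mathbf{1}[\lfloor 2^k u\rfloor = \lfloor 2^k v\rfloor]$ and partitioning $[0,1)$ into the $2^k$ dyadic intervals of length $2^{-k}$ gives
\begin{equation*}
    E\bigl[\beta(U_{(i)},U_{(j)})\bigr] \;=\; 1 \;+\; \sum_{k \ge 1}\sum_{a=0}^{2^k - 1} P\!\bigl(U_{(i)}, U_{(j)} \in [a 2^{-k},(a+1)2^{-k})\bigr),
\end{equation*}
so the baseline contribution from the leading $1$ equals $\sum_{i<j}(2/j) = 2(n-H_n)$.

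To obtain the claimed exact formula, I would expand each remaining probability via a multinomial decomposition of the $n$ keys into the three intervals lying left of, inside, and right of the dyadic block, and then interchange the orders of summation. Carrying out the sum over $i$ first collapses by a binomial identity; the sum over $a$ of terms of the form $a^s(2^k - a - 1)^w$ is then handled by Faulhaber's formula and produces the Bernoulli numbers $B_j$; and the geometric sum over $k$ of $2^{-jk}$ yields the factors $(1 - 2^{-j})^{-1}$. Assembling these pieces, together with the baseline, must reproduce the numerator $n - j + 1 - \binom{n}{j}$ and hence the stated exact expression.

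The asymptotic estimate is the main obstacle. Individually, $2n(H_n - 1)\sim 2n\ln n$, while the Bernoulli sum contains rapidly varying signs $B_j$ and binomial weights $\binom{n}{j}$ that can be enormous, yet the combination must be linear in $n$ up to polylogarithmic corrections. To extract this delicate cancellation I would recast the Bernoulli sum as a Mellin--Rice contour integral, using classical representations such as $B_j = -j\zeta(1-j)$ (together with the integral form of $(1-2^{-s})^{-1}$) to recognize the summand as residues of a meromorphic kernel whose $s$-independent structure mirrors the explicit factors $\zeta(1-s)\Gamma(1-s)/[\Gamma(4-s)(1-s)]$ appearing in the definition of $c$. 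Shifting the contour to the left harvests residues at $s = 1$ and at the (double) pole $s = 0$---delivering the $cn$, $(\ln n)^2/\ln 2$, and $\ln n$ main terms---and at the imaginary array $s = \chi_k = 2\pi i k/\ln 2$ of poles of $(1-2^{-s})^{-1}$, producing the oscillatory sum inside $c$. The technical work is to bound the residual contour integral to $O(1)$ using classical growth estimates for $\zeta$ and $\Gamma$ on vertical lines, to justify absolute convergence of $\sum_{k \ne 0}$, and to identify the rational constants $28/9$ and $(17-6\gamma)/(9\ln 2)$ as the regular contribution to $c$.
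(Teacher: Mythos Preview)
Your outline for the exact formula is close in spirit to the paper's, though the paper organizes the computation differently: rather than expanding the probability that $U_{(i)},U_{(j)}$ share a dyadic block via a three-way multinomial, it first collapses the full double sum $\sum_{i<j}(2/j)f_{U_{(i)},U_{(j)}}(s,t)$ into the single closed form $P_1(s,t,1,n)=2\sum_{j=2}^n(-1)^j\binom{n}{j}t^{j-2}$ and only then integrates over dyadic rectangles. Faulhaber's formula and the geometric sum in $k$ enter at the same point in both routes, so your plan should reach the same exact expression once the bookkeeping is done; note however that your ``baseline'' $2(n-H_n)$ is the key-comparison count, whereas the paper's $2n(H_n-1)$ arises from a different regrouping, so you will have to verify explicitly that the two decompositions agree.

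The asymptotic step is where your plan diverges more substantially from the paper and where there is a genuine gap. The Bernoulli sum
\[
t_n=\sum_{j=2}^{n-1}\frac{B_j}{j(1-2^{-j})}\Bigl[\frac{n-\binom{n}{j}}{j-1}-1\Bigr]
\]
is \emph{not} as it stands an alternating binomial sum $\sum_j(-1)^j\binom{n}{j}\phi(j)$, so you cannot feed it directly into a Rice integral. You would at minimum have to split off the pieces proportional to $n$ and the pieces independent of $n$ (both of which are ordinary numerical series in $j$ requiring separate evaluation) before the $\binom{n}{j}$ piece can be treated by Rice's method; your proposal does not mention this separation, and without it the stated pole structure at $s=0,1,\chi_k$ does not correspond to any well-defined kernel. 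The paper sidesteps this entirely by a different maneuver: it takes \emph{second differences} $v_n:=t_{n+2}-2t_{n+1}+t_n$, which telescopes the numerator $n-j+1-\binom{n}{j}$ down to a pure alternating binomial sum $\sum_k(-1)^k\binom{n}{k}\zeta(-1-k)/[(k+1)(1-2^{-(k+2)})]$. Rice's integral then applies cleanly to $v_n$, with poles at $s=-1$, $s=-2$, and $s=-2+\chi_k$; two subsequent finite summations recover $u_n$ and then $t_n$, and it is in these back-summations that the constants $28/9$ and $(17-6\gamma)/(9\ln 2)$ emerge. Your direct approach could presumably be pushed through after the splitting above, but the second-difference trick is what actually makes the residue calculus and the identification of $c$ tractable.
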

The asymptotic formula shows that the expected number of bit
comparisons is asymptotically linear in $n$ with the lead-order
coefficient approximately equal to 5.27938.  Hence the expected
number of bit comparisons is asymptotically different from that of
\emph{key} comparisons required to find the smallest key only by a
constant factor (the expectation for key comparisons is
asymptotically 2$n$). Complex-analytical methods are utilized to
obtain the asymptotic formula. Details of the derivations of the
formulae are
described in Section \ref{analysisOfMu1}.\\
\indent     We also derive exact and asymptotic expressions for the
expected number of bit comparisons for the average case.  We denote
this expectation by $\mu (\bar{m},n)$.  In the average case, the
parameter $m$ in $\mu(m,n)$ is considered a discrete uniform random
variable; hence $\mu (\bar{m},n) = \frac{1}{n} \sum_{m=1}^{n} \mu
(m,n).$ The derived asymptotic formula shows that $\mu (\bar{m},n)$
is also asymptotically linear in $n$; see
(\ref{asymptoticFormulaForMuAvg}).  More detailed results
for $\mu(\bar{m},n)$ are described in Section \ref{analysisOfMuAvg}.\\
\indent    Lastly, in Section \ref{closedForm}, we derive an exact
expression of $\mu (m,n)$ for each fixed $m$ that is suited for
computations. Our preliminary exact formula for $\mu (m,n)$ [shown
in (\ref{eqnMu})] entails infinite summation and integration. As a
result, it is not a desirable form for numerically computing the
expected number of bit comparisons. Hence we establish another exact
formula that only requires finite summation and use it to compute
$\mu (m,n)$ for $m = 1,\ldots,n$, $n=2,\ldots,25$. The computation
leads to the following conjectures: (i) for fixed $n$, $\mu (m,n)$
increases in $m$ for $m \leq \frac{n+1}{2}$ and is symmetric about
$\frac{n+1}{2}$; and (ii) for fixed $m$, $\mu (m,n)$ increases in
$n$ (asymptotically linearly).

\section{Preliminaries}\label{preliminaries}
\setlength\parindent{.5in}
To investigate the bit complexity of {\tt Quickselect}, we follow the
general approach developed by Fill and Janson \cite{fj2004}. Let
$U_1,\ldots,U_n$ denote the $n$ keys uniformly and independently
distributed on (0, 1), and let $U_{(i)}$ denote the rank-$i$ key.
Then, for $1 \leq i < j \leq n$ (assume $n \geq 2$),
\begin{eqnarray}
P\{U_{(i)}\ \mbox{and}\ U_{(j)}\ \mbox{are}\ \mbox{compared}\}
 = \left \{ \begin{array}{ccc}
                \displaystyle \frac{2}{j-m+1} \ \ \ & \mbox{if $m \leq i$} \\
                \displaystyle \frac{2}{j-i+1} \ \ \ & \mbox{if $i < m < j$} \\
                \displaystyle \frac{2}{m-i+1} \ \ \ & \mbox{if $j \leq m$}.
                \end{array}
        \right.\label{event1}
\end{eqnarray}
To determine the first probability in (\ref{event1}), note that
$U_{(m)},\ldots,U_{(j)}$ remain in the same subset until the first
time that one of them is chosen as a pivot. Therefore, $U_{(i)}$ and
$U_{(j)}$ are compared if and only if the first pivot chosen from
$U_{(m)},\ldots,U_{(j)}$ is either $U_{(i)}$ or $U_{(j)}$. Analogous
arguments establish the other two cases.\\
\indent     For $0 < s < t < 1$, it is well known that the joint
density function of $U_{(i)}$ and $U_{(j)}$ is given by
\begin{eqnarray}
f_{U_{(i)},U_{(j)}}(s,t) & := &{n\choose{i-1,1,j-i-1,1,n-j}}\
s^{i-1} (t-s)^{j-i-1} (1-t)^{n-j}.\label{jointDensity}
\end{eqnarray}
Clearly, the event that $U_{(i)}$ and $U_{(j)}$ are compared is
independent of the random variables $U_{(i)}$ and $U_{(j)}$. Hence,
defining
\begin{eqnarray}
P_1(s,t,m,n)  & = & \sum_{m \leq i < j \leq n} \frac{2}{j-m+1} f_{U_{(i)},U_{(j)}}(s,t),\label{eqnP1}\\
P_2(s,t,m,n)  & = & \sum_{1 \leq i < m < j \leq n} \frac{2}{j-i+1} f_{U_{(i)},U_{(j)}}(s,t),\label{eqnP2} \\
P_3(s,t,m,n)  & = & \sum_{1 \leq i < j \leq m} \frac{2}{m-i+1}
f_{U_{(i)},U_{(j)}}(s,t),\label{eqnP3}\\
P(s,t,m,n) & = & P_1(s,t,m,n) + P_2(s,t,m,n) +
P_3(s,t,m,n)\label{eqnP}
\end{eqnarray}
[the sums in (\ref{eqnP1})--(\ref{eqnP3}) are double sums over $i$
and $j$], and letting $\beta(s,t)$ denote the index of the first bit
at which the keys $s$ and $t$ differ, we can write the expectation
$\mu (m,n)$ of the number of bit comparisons required to find the
rank-$m$ key in a file of $n$ keys as
\begin{eqnarray}
\mu (m,n) & = & \int_{0}^{1} \! \int_{s}^{1} \! \beta (s,t) P(s,t,m,n)\, dt\, ds \label{anotherEqnMu}\\
    & = & \sum_{k=0}^{\infty} \sum_{l=1}^{2^k} \int_{(l-1)2^{-k}}^{(l-\frac{1}{2})2^{-k}} \! \int_{(l-\frac{1}{2})2^{-k}}^{l2^{-k}}
    (k+1) P(s,t,m,n)\, dt\, ds;\label{eqnMu}
\end{eqnarray}
in this expression, note that $k$ represents the last bit at which $s$ and $t$ agree.\\

\section{Analysis of $\mu(1,\lowercase{n})$}\label{analysisOfMu1}
\setlength\parindent{.5in}
In Section \ref{derivationOfMu1}, we derive the exact expression for
$\mu(1,n)$ shown in Theorem \ref{thmForMu1}.  In Section
\ref{asymptoticAnalysisOfMu1}, we prove the asymptotic result stated
in Theorem \ref{thmForMu1}.
\subsection{Exact Computation of $\mu(1,n)$}\label{derivationOfMu1}

Since the contribution of $P_2(s,t,m,n)$ or $P_3(s,t,m,n)$ to
$P(s,t,m,n)$ is zero for $m=1$, we have $P(s,t,1,n) = P_1(s,t,1,n)$
[see (\ref{eqnP2}) through (\ref{eqnP})]. Let $x := s,\ y := t-s,\
z:=1-t$. Then
\begin{eqnarray}
P_1(s,t,1,n) & = & z^n \sum_{1 \leq i < j \leq n} \frac{2}{j} {n\choose{i-1,1,j-i-1,1,n-j}}\ x^{i-1} y^{j-i-1} z^{-j}\nonumber\\
    & = & 2z^n \int_{z}^{\infty}\! \eta ^{-n-1} \sum_{1 \leq i < j \leq n} {n\choose{i-1,1,j-i-1,1,n-j}}\ x^{i-1} y^{j-i-1} \eta^{n-j}\, d \eta\nonumber\\
    & = & 2z^n \int_{z}^{\infty}\! \eta ^{-n-1} n(n-1)(x+y+\eta)^{n-2}\, d \eta \nonumber\\
    & = & 2z^n n(n-1) \int_{z}^{\infty}\! \eta ^{-3} \left(\frac{t}{\eta}+1 \right)^{n-2}\, d
    \eta.\label{firstPstM1}
\end{eqnarray}
Making the change of variables $v=\frac{t}{\eta}+1$ and integrating,
and recalling $z=1-t$, we find, after some calculation,
\begin{eqnarray}
P_1(s,t,1,n) & = & 2 \sum_{j=2}^{n} (-1)^j {n \choose j}
t^{j-2}.\label{eqnPstM1}
\end{eqnarray}
From (\ref{eqnMu}) and (\ref{eqnPstM1}),
\begin{eqnarray}
\mu (1,n) & = & \sum_{k=0}^{\infty} (k+1) \sum_{l=1}^{2^k} \int_{(l-1)2^{-k}}^{(l-\mbox{$\frac{1}{2}$})2^{-k}}\! \int_{(l-\mbox{$\frac{1}{2}$})2^{-k}}^{l2^{-k}} \! P_1(s,t,1,n)\, dt\, ds \nonumber \\
    & = & 2 \sum_{k=0}^{\infty} (k+1) \sum_{l=1}^{2^k} \int_{(l-1)2^{-k}}^{(l-\mbox{$\frac{1}{2}$})2^{-k}}\! \int_{(l-\mbox{$\frac{1}{2}$})2^{-k}}^{(l-1)2^{-k}}
    \! \sum_{j=2}^{n} (-1)^j {n \choose j} t^{j-2} \, dt\, ds\nonumber\\
    & = & 2 \sum_{k=0}^{\infty} (k+1) \sum_{l=1}^{2^k} \sum_{j=2}^{n} (-1)^j {n \choose j} \int_{(l-\mbox{$\frac{1}{2}$})2^{-k}}^{l2^{-k}}
    \!  t^{j-2} [(l-\mbox{$\frac{1}{2}$})2^{-k}-(l-1)2^{-k}]\, dt\nonumber\\
    & = & \sum_{k=0}^{\infty} (k+1) \sum_{l=1}^{2^k} \sum_{j=2}^{n} \frac{(-1)^j {n \choose j}}{j-1} 2^{-k}\{(l2^{-k})^{j-1}-[(l-\mbox{$\frac{1}{2}$})2^{-k}]^{j-1}\}\nonumber\\
    & = & \sum_{j=2}^{n} \frac{(-1)^j {n \choose j}}{j-1} \sum_{k=0}^{\infty} (k+1) 2^{-kj}\sum_{l=1}^{2^k}
    [l^{j-1}-(l-\mbox{$\frac{1}{2}$})^{j-1}].\label{intermediateMu1}
\end{eqnarray}
To further transform (\ref{intermediateMu1}), define
\begin{eqnarray}\label{a_jr}
a_{j,r}
 =   \left \{ \begin{array}{ccc}
                \displaystyle \frac{B_r}{r} {{j-1}\choose{r-1}} \ \ \ & \mbox{if $r \geq 2$} \\
                \displaystyle \mbox{$\frac{1}{2}$} \ \ \ & \mbox{if $r=1$} \\
                \displaystyle \mbox{$\frac{1}{j}$} \ \ \ & \mbox{if $r=0$,}
                \end{array}
        \right.
\end{eqnarray}
where $B_r$ denotes the $r$-th Bernoulli number. Let
$S_{n,j}:=\sum_{l=1}^{n}l^{j-1}$.  Then $S_{n,j} = \sum_{r=0}^{j-1}
a_{j,r}n^{j-r}$ (see Knuth \cite{k2002v3}), and
\begin{eqnarray}
\lefteqn{\sum_{l=1}^{2^k}
    [l^{j-1}-(l-\mbox{$\frac{1}{2}$})^{j-1}] = S_{2^k,j} - 2^{-(j-1)}\sum_{l=1}^{2^k}(2l-1)^{j-1}}\nonumber\\
    &=& S_{2^k,j} - 2^{-(j-1)}(S_{2^{k+1},j}-2^{j-1}S_{2^k,j}) = 2S_{2^k,j} - 2^{-(j-1)}S_{2^{k+1},j}\nonumber\\
    &=& 2\sum_{r=0}^{j-1}a_{j,r}2^{k(j-r)}-2^{-(j-1)} \sum_{r=0}^{j-1}a_{j,r}2^{(k+1)(j-r)}
    = 2\sum_{r=1}^{j-1}a_{j,r}2^{k(j-r)}(1-2^{-r}).\label{partOfMu1WithAnj}
\end{eqnarray}
From (\ref{intermediateMu1}) and (\ref{partOfMu1WithAnj}),
\begin{eqnarray}
\mu (1,n)
    & = & 2\sum_{j=2}^{n} \frac{(-1)^j {n \choose j}}{j-1} \sum_{k=0}^{\infty} (k+1) 2^{-kj}
    \sum_{r=1}^{j-1}a_{j,r}2^{k(j-r)}(1-2^{-r}).\nonumber
\end{eqnarray}
Here
\begin{eqnarray}
\lefteqn{\sum_{k=0}^{\infty} (k+1) 2^{-kj}
\sum_{r=1}^{j-1}a_{j,r}2^{k(j-r)}(1-2^{-r})
    = \sum_{k=0}^{\infty} (k+1) \sum_{r=1}^{j-1}a_{j,r}2^{-kr}(1-2^{-r})}\nonumber\\
    &=& \sum_{r=1}^{j-1} a_{j,r} (1-2^{-r})\sum_{k=0}^{\infty} (k+1) 2^{-kr}
    = \sum_{r=1}^{j-1} a_{j,r} (1-2^{-r})^{-1}.\nonumber
\end{eqnarray}
Hence
\begin{eqnarray}
\mu (1,n)
    & = & 2\sum_{j=2}^{n} \frac{(-1)^j {n \choose j}}{j-1} \sum_{r=1}^{j-1} a_{j,r}
    (1-2^{-r})^{-1}
     = 2\sum_{r=1}^{n-1} (1-2^{-r})^{-1} \sum_{j=r+1}^{n} \frac{(-1)^j {n \choose j}}{j-1}  a_{j,r}
    \nonumber\\
    & = & 2\sum_{j=2}^{n} \frac{(-1)^j {n \choose j}}{j-1} +
    2\sum_{r=2}^{n-1} (1-2^{-r})^{-1} \frac{B_r}{r}\sum_{j=r+1}^{n} \frac{(-1)^j {n \choose j}{{j-1} \choose
    {r-1}}}{j-1}\nonumber\\
    &=& 2\sum_{j=2}^{n} \frac{(-1)^j {n \choose j}}{j-1} + 2\sum_{r=2}^{n-1} (1-2^{-r})^{-1} \frac{B_r}{r} \left[ \sum_{j=r}^{n}
    \frac{(-1)^j {n \choose j}{{j-1} \choose {r-1}}}{j-1} - \frac{(-1)^r {n \choose
    r}}{r-1}\right]\!\!.\nonumber\\
    & &\label{intermediateMu1_1}
\end{eqnarray}
To simplify $\sum_{j=r}^{n}\frac{(-1)^j {n \choose j}{{j-1} \choose
{r-1}}}{j-1}$, note that
\begin{eqnarray}
\lefteqn{\sum_{j=r}^{n}{n \choose j}{{j-1} \choose {r-1}}z^{j-2}
    =\frac{r\, n!}{(n-r)!r!}\sum_{j=r}^{n}\frac{(n-r)!}{j(n-j)!(j-r)!}z^{j-2}}\nonumber\\
    &=&r{n \choose r}z^{-2}\sum_{j=r}^{n}{{n-r} \choose {j-r}}
    \frac{z^j}{j}
    =r{n \choose r}z^{-2}\sum_{j=0}^{n-r}{{n-r} \choose {j}}
    \frac{z^{j+r}}{j+r}\nonumber\\
    &=&r{n \choose r} z^{-2} \int_{0}^{z} \zeta^{r-1} \sum_{j=0}^{n-r}{{n-r} \choose {j}}
    \zeta^j\, d \zeta
    =r{n \choose r} z^{-2} \int_{0}^{z} \zeta^{r-1} (1+\zeta)^{n-r}\, d \zeta.\nonumber
\end{eqnarray}
Thus
\begin{eqnarray}
\lefteqn{\sum_{j=r}^{n}\frac{(-1)^j {n \choose j}{{j-1} \choose
    {r-1}}}{j-1}
    = \int_{-1}^{0}\! \left[\sum_{j=r}^{n}{n \choose j}{{j-1} \choose {r-1}}z^{j-2}
    \right]\, dz}\nonumber\\
    &=& -r{n \choose r} \int_{-1}^{0}\! z^{-2} \int_{z}^{0}\! \zeta^{r-1} (1+\zeta)^{n-r}\, d
    \zeta \, dz
    = -r{n \choose r} \int_{-1}^{0}\! \zeta^{r-1} (1+\zeta)^{n-r} \int_{-1}^{\zeta}\! z^{-2}
    \, dz \,d\zeta\nonumber\\
    &=& r{n \choose r} \int_{-1}^{0}\! \zeta^{r-2} (1+\zeta)^{n-r+1}\, d\zeta
    = (-1)^r r{n \choose r} \int_{0}^{1} \! u^{r-2} (1-u)^{n-r+1}\, du\nonumber\\
    &=& (-1)^r r{n \choose r}
    \frac{\Gamma(r-1)\Gamma(n-r+2)}{\Gamma(n+1)}
    = \frac{(-1)^r (n-r+1)}{r-1}.\label{partOfMu1}
\end{eqnarray}
Plugging (\ref{partOfMu1}) into (\ref{intermediateMu1_1}) and
recalling $B_{2k+1}=0$ for $k \geq 1$, we finally obtain
\begin{eqnarray}
\mu (1,n)
    &=& 2\sum_{j=2}^{n} \frac{(-1)^j {n \choose j}}{j-1} + 2\sum_{r=2}^{n-1} (1-2^{-r})^{-1} \frac{B_r}{r} \left[\frac{(-1)^r (n-r+1)}{r-1} - \frac{(-1)^r {n \choose
    r}}{r-1}\right]\nonumber\\
    & = &2 \sum_{j=2}^n \frac{(-1)^j  {{n}\choose{j}}}{j-1} + 2
    \sum_{j=2}^{n-1}B_j\frac{n-j+1-{{n}\choose{j}}}{j(j-1)(1-2^{-j})}\nonumber\\
    & = & 2n(H_n-1)+ 2 t_n \label{eqnMu1},
\end{eqnarray}
where $H_n$ denotes the $n$-th harmonic number and
\begin{eqnarray}
t_n := \sum_{j=2}^{n-1} \frac{B_j}{j(1-2^{-j})} \left[ \frac{n-{{n}
\choose {j}}}{j-1}-1 \right].\label{t_n1}
\end{eqnarray}
The last equality in (\ref{eqnMu1}) follows from the easy identity
\begin{eqnarray}
\sum_{k=1}^{n}\frac{(-1)^{k-1}{{n}\choose{k}}}{k} = H_n.\nonumber
\end{eqnarray}
\newpage
\subsection{Asymptotic Analysis of
$\mu(1,n)$}\label{asymptoticAnalysisOfMu1} In order to obtain an
asymptotic expression for $\mu (1,n)$, we analyze $t_n$ in
(\ref{eqnMu1})--(\ref{t_n1}).  The following lemma provides an exact
expression for $t_n$ that easily leads to an asymptotic expression
for $\mu(1,n)$:
\begin{lemma}\label{lemmaFor_t_n}
For $n\geq 2$, let $u_n:=t_{n+1} - t_{n}$ $(with\ t_2 = 0)$ and $v_n
:= v_{n+1}-v_{n}$. Let $\gamma$ denote Euler's constant $(\doteq
0.57722)$, and define $\chi_k := \frac{2\pi ik}{\ln 2}$. Then
\begin{enumerate}
\item[$(i)$]
\begin{eqnarray}v_n = \frac{1}{n+1}+\frac{\frac{H_{n+2}}{\ln
2}-(\frac{\gamma}{\ln
2}-\frac{1}{2})}{(n+1)(n+2)}-\Sigma_n,\nonumber
\end{eqnarray}
where
\begin{eqnarray}
\Sigma_n := \sum_{k \in
\mathbb{Z}\backslash\{0\}}\frac{\zeta(1-\chi_k) \Gamma(n+1)
\Gamma(1-\chi_k)}{(\ln 2) \Gamma(n+3-\chi_k)};\nonumber
\end{eqnarray}
\item[$(ii)$]
\begin{eqnarray}
u_n = -H_n + a - \frac{H_{n+1}}{(\ln2)(n+1)}+\left(\frac{\gamma
-1}{\ln2}-\frac{1}{2}\right)\frac{1}{n+1}+\tilde{\Sigma}_n,\nonumber
\end{eqnarray}
where
\begin{eqnarray} a &:=&
\frac{14}{9}+\frac{17-6\gamma}{18\ln2}-\frac{2}{\ln 2}\sum_{k \in
\mathbb{Z} \backslash
\{0\}}\frac{\zeta(1-\chi_k)\Gamma(1-\chi_k)}{\Gamma(4-\chi_k)(1-\chi_k)},\nonumber\\
\tilde{\Sigma}_n &:=& \sum_{k \in Z \setminus
\{0\}}\frac{\zeta(1-\chi_k)\Gamma(1-\chi_k)}{(\ln2)(1-\chi_k)}
\frac{\Gamma(n+1)}{\Gamma(n+2-\chi_k)};\nonumber
\end{eqnarray}
\item[$(iii)$]
\begin{eqnarray}
t_n &=& -(nH_n-n-1) + a(n-2) -\frac{1}{2\ln2}\left[
H_n^2 + H_n^{(2)} - \frac{7}{2}\right]\nonumber\\
    & & +\left(\frac{\gamma -1}{\ln2}-\frac{1}{2}\right)\left( H_n -
\frac{3}{2}\right) + b - \tilde{\tilde{\Sigma}}_n,\nonumber
\end{eqnarray}
where
\begin{eqnarray}
b &:=& \sum_{k \in
\mathbb{Z}\backslash\{0\}}\frac{2\zeta(1-\chi_k)\Gamma(-\chi_k)}{(\ln2)(1-\chi_k)\Gamma(3-\chi_k)},\nonumber\\
\tilde{\tilde{\Sigma}}_n &:=& \sum_{k \in
\mathbb{Z}\backslash\{0\}}\frac{\zeta(1-\chi_k)\Gamma(-\chi_k)\Gamma(n+1)}{(\ln2)(1-\chi_k)\Gamma(n+1-\chi_k)},\nonumber
\end{eqnarray}
and $H_n^{(2)}$ denotes the $n$-th Harmonic number of order 2, i.e.,
$H_n^{(2)} := \sum_{i=1}^{n}\frac{1}{i^2}$.
\end{enumerate}
\end{lemma}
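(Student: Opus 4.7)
The plan is to reduce the computation of $t_n$ to two successive forward differences and then use a suitable Mellin-type contour integral to extract both the polynomial-in-$H_n$ main terms and the oscillating series. I would first rewrite the bracketed factor in (\ref{t_n1}) as $(n-j+1-\binom{n}{j})/(j-1)$, giving
\begin{eqnarray*}
t_n = \sum_{j=2}^{n-1} \frac{B_j\bigl[n-j+1-\binom{n}{j}\bigr]}{j(j-1)(1-2^{-j})},
\end{eqnarray*}
and then form $u_n = t_{n+1}-t_n$ via the Pascal identity $\binom{n+1}{j}-\binom{n}{j}=\binom{n}{j-1}$ together with the extra summand at $j=n$ in $t_{n+1}$. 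A second application gives $v_n = u_{n+1}-u_n$, now involving $\binom{n}{j-2}$ plus a further boundary term, which is simple enough to attack directly.

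Next, using $B_j/j = -\zeta(1-j)$ to analytically continue the summand, I would represent $v_n$ as a contour integral
\begin{eqnarray*}
v_n = \frac{1}{2\pi i}\int_{\mathcal{C}} \frac{\zeta(1-z)\,\Gamma(n+1)\,\Gamma(-z)}{(1-2^{-z})(1-z)\,\Gamma(n+1-z)}\,dz,
\end{eqnarray*}
where $\mathcal{C}$ initially encircles the contributing integer abscissae and is then pushed leftward to collect residues. Simple poles of $(1-2^{-z})^{-1}$ at $z=\chi_k$ ($k\in\mathbb{Z}$, residue $1/\ln 2$), together with the additional pole at $z=0$ from $\zeta(1-z)$ and $\Gamma(-z)$, are the relevant singularities. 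A simultaneous Laurent expansion at $z=0$ using $\zeta(1-z)=-\tfrac{1}{z}+\gamma+O(z)$, $(1-2^{-z})^{-1}=\tfrac{1}{z\ln 2}+\tfrac{1}{2}+O(z)$, $\Gamma(-z)=-\tfrac{1}{z}-\gamma+O(z)$, and the standard expansion of $\Gamma(n+1)/\Gamma(n+1-z)$ in terms of $H_n$ produces the explicit main terms $\tfrac{1}{n+1}$ and $[H_{n+2}/\ln 2 - (\gamma/\ln 2 - \tfrac{1}{2})]/[(n+1)(n+2)]$. The residues at $\chi_k$ ($k\neq0$) assemble into $-\Sigma_n$, and the tail of the displaced contour is negligible thanks to decay of $\Gamma(-z)/\Gamma(n+1-z)$ in vertical strips. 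This establishes part~(i).

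Parts (ii) and (iii) follow by telescoping the result of (i). For (ii), using $u_n = u_2 + \sum_{m=2}^{n-1}v_m$, the rational pieces sum via $\sum 1/(m+1) = H_n-1$ and a partial-fraction expansion of $1/[(m+1)(m+2)]$, while the oscillating piece collapses via the beta-function identity
\begin{eqnarray*}
\sum_{m=2}^{n-1}\frac{\Gamma(m+1)}{\Gamma(m+3-\chi_k)} = \frac{1}{1-\chi_k}\left[\frac{\Gamma(2)}{\Gamma(3-\chi_k)} - \frac{\Gamma(n+1)}{\Gamma(n+2-\chi_k)}\right],
\end{eqnarray*}
converting $\Sigma_n$ into $\tilde{\Sigma}_n$, with every $n$-independent remainder absorbed into the single constant $a$. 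Part~(iii) repeats the same mechanism once more: from $t_n = t_2 + \sum_{m=2}^{n-1}u_m$, the summation $\sum H_{m+1}/(m+1)$ produces $\tfrac{1}{2}[H_n^2 + H_n^{(2)}]$, and an analogous beta telescoping shifts $\tilde{\Sigma}_n$ to $\tilde{\tilde{\Sigma}}_n$, with the residual constant collected into $b$.

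The main obstacle will be the bookkeeping of the additive constants $a$ and $b$: each beta telescoping leaves behind a convergent $n$-independent tail of the shape $\sum_k \zeta(1-\chi_k)\Gamma(1-\chi_k)/[\Gamma(3-\chi_k)(1-\chi_k)]$, and the Laurent expansion at $z=0$ contributes additional constant pieces that must be matched against the boundary values $t_2=0$ and $u_2$. Confirming the closed forms of $a$ and $b$ stated in the lemma will therefore require careful comparison of these constants, along with a justification for interchanging the infinite $k$-sum with both the contour displacement and the finite $m$-summation; this interchange is afforded by the rapid decay of $|\Gamma(1-\chi_k)|$ and $|\zeta(1-\chi_k)|$ in $|k|$.
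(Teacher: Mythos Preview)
Your overall strategy---take two forward differences to reach $v_n$, represent $v_n$ by a Rice--N\"orlund contour integral, collect residues at the poles of $(1-2^{-\cdot})^{-1}$ and of the zeta/gamma factors, and then telescope back for (ii) and (iii) via the beta-type identity you wrote---is exactly the paper's approach, and your handling of (ii) and (iii) matches it in every detail.

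The one concrete slip is in the integral you display for $v_n$. After two differences you correctly note that $v_n$ involves $\binom{n}{j-2}$, but the kernel $\Gamma(n+1)\Gamma(-z)/\Gamma(n+1-z)$ has residue $(-1)^j\binom{n}{j}$ at $z=j$, not $(-1)^j\binom{n}{j-2}$; as written, your integral therefore encodes a different sum. In the paper's variable (essentially $s=j-2$) the correct integrand is
\[
\phi(s)=\frac{\zeta(-1-s)}{(s+1)\bigl[1-2^{-(s+2)}\bigr]}\cdot\frac{n!}{s(s-1)\cdots(s-n)},
\]
with $\mathcal{C}$ encircling $0,\dots,n-1$. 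This distinction matters for the pole count you then rely on: the correct $\phi$ has a \emph{simple} pole at $s=-1$ (yielding the $1/(n+1)$ term) and a \emph{double} pole at $s=-2$ (yielding the $H_{n+2}$ term), whereas your integrand has a triple pole at $z=0$ and an additional, unmentioned double pole at $z=1$ arising from the coincidence of the zero of $1-z$ with the pole of $\Gamma(-z)$. Once the kernel is fixed, your Laurent bookkeeping and the telescoping for (ii)--(iii) go through exactly as you describe.
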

\noindent In this lemma, $u_n$ and $v_n$ are derived in order to
obtain the exact expression for $t_n$ in (iii). From
($\ref{eqnMu1}$), the exact expression for $t_n$ also provides an
alternative exact expression for $\mu(1,n)$.\\
\indent Before proving Lemma \ref{lemmaFor_t_n}, we complete the
proof of Theorem \ref{thmForMu1} using part (iii).  We know
\begin{eqnarray}
H_n & = & \ln n + \gamma + \frac{1}{2n} -
\frac{1}{12n^2}+O(n^{-4}),\label{H_n}\\
H_n^{(2)} & = &
\frac{\pi^2}{6}-\frac{1}{n}+\frac{1}{2n^2}+O(n^{-3}).\label{H2_n}
\end{eqnarray}
Combining (\ref{H_n})--(\ref{H2_n}) with (\ref{eqnMu1}) and Lemma
\ref{lemmaFor_t_n}(iii), we obtain an asymptotic expression for $\mu
(1,n)$:
\begin{eqnarray}
\mu (1,n) & = & 2an-\frac{1}{\ln2}(\ln n)^2-\left(\frac{2}{\ln 2} +
1\right)\ln n + O(1).\label{asymptM1}
\end{eqnarray}
The term $O(1)$ in (\ref{asymptM1}) has fluctuations of small
magnitude due to $\tilde{\tilde{\Sigma}}_n$, which is periodic in
$\log n$ with amplitude smaller than 0.00110. The asymptotic slope
in (\ref{asymptM1}) is
\begin{eqnarray}
c = 2a = \frac{28}{9} + \frac{17-6\gamma}{9\ln 2} - \frac{4}{\ln
2}\sum_{k \in
\mathbb{Z}\backslash\{0\}}\frac{\zeta(1-\chi_k)\Gamma(1-\chi_k)}{\Gamma(4-\chi_k)(1-\chi_k)}
\doteq 5.27938.\label{asymptoticSlopeForMu1}
\end{eqnarray}

Now we prove Lemma \ref{lemmaFor_t_n}:
\begin{proof}
(i) Since
\begin{eqnarray}
u_n & = & t_{n+1} - t_{n} = \sum_{j=2}^{n} \frac{B_j}{j(1-2^{-j})}
\left[ \frac{(n+1)-{{n+1} \choose {j}}}{j-1}-1 \right] -
\sum_{j=2}^{n-1} \frac{B_j}{j(1-2^{-j})} \left[ \frac{n-{{n} \choose
{j}}}{j-1}-1
\right] \nonumber \\
& = & -\sum_{j=2}^{n} \frac{B_j}{j(j-1)(1-2^{-j})}
\left[{{n}\choose{j-1}}-1\right],\nonumber
\end{eqnarray}
it follows that
\begin{eqnarray}
v_n &=& u_{n+1}-u_n = -\sum_{j=2}^{n+1} \frac{B_j}{j(j-1)(1-2^{-j})}
\left[{{n+1}\choose{j-1}}-1\right] + \sum_{j=2}^{n+1}
\frac{B_j}{j(j-1)(1-2^{-j})}
\left[{{n}\choose{j-1}}-1\right]\nonumber\\
& = & -\sum_{k=0}^{n-1}{{n}\choose{k}}
\frac{B_{k+2}}{(k+2)(k+1)[1-2^{-(k+2)}]}\nonumber\\
& = & \sum_{k=0}^{n-1}(-1)^k {{n}\choose{k}}
\frac{\zeta(-1-k)}{(k+1)[1-2^{-(k+2)}]}\label{BernoulliAndZeta}\\
& = & \frac{(-1)^n}{2\pi i}\int_{\mathcal{C}} \!
\frac{\zeta(-1-s)}{(s+1)[1-2^{-(s+2)}]}\frac{n!}{s(s-1)\cdots(s-n)}\,
ds \label{v_nIntegralRepresenation},
\end{eqnarray}
where $\mathcal{C}$ is a positively oriented closed curve that
encircles the integers 0,$\dots$, $n-1$ and does not include or
encircle any of the following points: $-2+\chi_k$ (where $\chi_k :=
\frac{2 \pi i k}{\ln 2}$), $k \in \mathbb{Z}$; $-1$; and $n$.
Equality (\ref{BernoulliAndZeta}) follows from the fact that the
Bernoulli numbers are extrapolated by the Riemann zeta function
taken at nonnegative integers: $B_k = -k \zeta(1-k)$. [The
coefficients $(-1)^k$ do not concern us since the Bernoulli numbers
of odd index greater than 1 vanish.]  Equality
(\ref{v_nIntegralRepresenation}) follows from a direct application
of residue calculus, taking into account contributions of the simple
poles at the integers 0,$\dots$, $n-1$.\\
\indent Let $\phi (s)$ denote the integrand in
(\ref{v_nIntegralRepresenation}):
\begin{eqnarray}
\phi(s)
=\frac{\zeta(-1-s)}{(s+1)[1-2^{-(s+2)}]}\frac{n!}{s(s-1)\cdots(s-n)}.\nonumber
\end{eqnarray}
We consider a positively oriented rectangular contour
$\mathcal{C}_l$ with horizontal sides $\textrm{Im}(s) = \lambda_l$
and $\textrm{Im} (s) = -\lambda_l$, where $\lambda_l :=
\frac{(2l+1)\pi}{\ln2}$, $l \in \mathbb{Z}^{+}$, and vertical sides
$\textrm{Re} (s) = n - \theta$ and $\textrm{Re} (s) = -\lambda_l$,
where $0 < \theta < 1$.  By elementary bounds on $\phi(s)$ along
$\mathcal{C}_l$ and the fact that
\begin{eqnarray}
\int_{n-\theta-i\infty}^{n-\theta+i\infty} \! \phi(s)\, ds =
0\label{complexIntegralAlongRVL}
\end{eqnarray}
(this is implicit on page 113 of Flajolet and Sedgewick
\cite{fs1995} and explicitly proved in the Appendix), one can show
that
\begin{eqnarray}
\lim_{l \rightarrow \infty} \int_{\mathcal{C}_l} \! \phi(s)\, ds =
0.\nonumber
\end{eqnarray}
Accounting for residues due to the poles encircled by
$\mathcal{C}_l$, we obtain
\begin{eqnarray}
v_n & = & (-1)^{n+1} \left\{\textrm{Res}_{s = -1}[\phi(s)]
 + \, \textrm{Res}_{s = -2}[\phi(s)] + \sum_{k \in
\mathbb{Z}\backslash
\{0\}} \! \textrm{Res}_{s = -2 + \chi_k}[\phi(s)] \right\}\nonumber\\
& = &
-\frac{1}{n+1}+\frac{\frac{H_{n+2}}{\ln2}-(\frac{\gamma}{\ln2}-\mbox{$\frac{1}{2}$})}{(n+1)(n+2)}-\Sigma_n,\label{v_n}
\end{eqnarray}
where
\begin{eqnarray}
\Sigma_n := \sum_{k \in
\mathbb{Z}\backslash\{0\}}\frac{\zeta(1-\chi_k) \Gamma(n+1)
\Gamma(1-\chi_k)}{(\ln 2) \Gamma(n+3-\chi_k)}.\label{sigma}
\end{eqnarray}
\qed

\noindent (ii) We have $u_2 = t_3 - t_2 = t_3 = -\frac{1}{9}$.
Hence, from (i),
\begin{eqnarray}
u_n & = & u_2 + \sum_{j=2}^{n-1} v_j = -\frac{1}{9} + \sum_{j=2}^{n-1} v_j\nonumber\\
& = & -\frac{1}{9} - \sum_{j=2}^{n-1}\frac{1}{j+1} +
\frac{1}{\ln2}\sum_{j=2}^{n-1}\frac{H_{j+2}}{(j+1)(j+2)}-\left(\frac{\gamma}{\ln2}-\frac{1}{2}\right)
\sum_{j=2}^{n-1}\frac{1}{(j+1)(j+2)} - \sum_{j=2}^{n-1}
\Sigma_j\nonumber\\
& = & -\frac{1}{9} - (H_n - H_2) +
\frac{1}{\ln2}\sum_{j=2}^{n-1}\frac{H_{j+2}}{(j+1)(j+2)}-\left(\frac{\gamma}{\ln2}-\frac{1}{2}\right)
\left(\frac{1}{3}-\frac{1}{n+1}\right) - \sum_{j=2}^{n-1}
\Sigma_j\nonumber\\
& = & \frac{14}{9} - \frac{\gamma}{3\ln2}-H_n +
\left(\frac{\gamma}{\ln2}-\frac{1}{2}\right)\frac{1}{n+1}+
\frac{1}{\ln2}\sum_{j=2}^{n-1}\frac{H_{j+2}}{(j+1)(j+2)} -
\sum_{j=2}^{n-1} \Sigma_j.\label{initialU_n}
\end{eqnarray}
Here
\begin{eqnarray}
\sum_{j=2}^{n-1}\frac{H_{j+2}}{(j+1)(j+2)} & = &
\sum_{j=3}^{n}\frac{H_{j+1}}{j} -
\sum_{j=4}^{n+1}\frac{H_{j}}{j}\nonumber\\
& = & \frac{H_4}{3}
+\sum_{j=4}^{n}\frac{H_{j+1}-H_j}{j}-\frac{H_{n+1}}{n+1}\label{nGeq3}\\
& = & \frac{17}{18} - \frac{H_n + 1}{n+1},\label{fineWithN2}
\end{eqnarray}
where we assume $n\geq3$ for (\ref{nGeq3}), but (\ref{fineWithN2})
holds also for $n=2$. In regard to $\sum_{j=2}^{n-1}\Sigma_j$, note
that
\begin{eqnarray}
\Sigma_n = -\sum_{k \in Z\setminus
\{0\}}\frac{\zeta(1-\chi_k)\Gamma(1-\chi_k)}{(\ln2)(1-\chi_k)}
\left[\frac{\Gamma(n+2)}{\Gamma(n+3-\chi_k)}-\frac{\Gamma(n+1)}{\Gamma(n+2-\chi_k)}\right],\nonumber
\end{eqnarray}
so that
\begin{eqnarray}
\sum_{j=2}^{n-1}\Sigma_j = -\sum_{k \in Z \setminus
\{0\}}\frac{\zeta(1-\chi_k)\Gamma(1-\chi_k)}{(\ln2)(1-\chi_k)}
\left[\frac{\Gamma(n+1)}{\Gamma(n+2-\chi_k)}-\frac{\Gamma(3)}{\Gamma(4-\chi_k)}\right].\label{sumOfSigma}
\end{eqnarray}
Define
\begin{eqnarray}
\tilde{\Sigma}_n:=\sum_{k \in Z \setminus
\{0\}}\frac{\zeta(1-\chi_k)\Gamma(1-\chi_k)}{(\ln2)(1-\chi_k)}
\frac{\Gamma(n+1)}{\Gamma(n+2-\chi_k)}.\label{sigma_tilde}
\end{eqnarray}
Then, combining (\ref{initialU_n}), (\ref{fineWithN2}), and
(\ref{sumOfSigma}), we obtain
\begin{eqnarray}
u_n & = & -H_n + a - \frac{H_{n+1}}{(\ln2)(n+1)}+\left(\frac{\gamma
-1}{\ln2}-\frac{1}{2}\right)\frac{1}{n+1}+\tilde{\Sigma}_n,\nonumber
\end{eqnarray}
where
\begin{eqnarray}
a := \frac{14}{9}+\frac{17-6\gamma}{18\ln2}-\frac{2}{\ln 2}\sum_{k
\in \mathbb{Z} \backslash
\{0\}}\frac{\zeta(1-\chi_k)\Gamma(1-\chi_k)}{\Gamma(4-\chi_k)(1-\chi_k)}.\label{a}
\end{eqnarray}
\qed

\noindent (iii) Closely following the derivation of $u_n$
described above, we obtain (for $n\geq2$)
\begin{eqnarray}
t_n & = & t_2 + \sum_{j=2}^{n-1} u_j = \sum_{j=2}^{n-1} u_j\nonumber\\
& = & -\sum_{j=2}^{n-1} H_j + a(n-2)
-\frac{1}{\ln2}\sum_{j=3}^{n}\frac{H_j}{j}+
\left(\frac{\gamma-1}{\ln2}-\frac{1}{2}\right)\left(H_n -
\frac{3}{2}\right) + \sum_{j=2}^{n-1}\tilde{\Sigma}_j\nonumber\\
& = &-(nH_n-n-1) + a(n-2) -\frac{1}{2\ln2}\left[
H_n^2 + H_n^{(2)} - \frac{7}{2}\right]\nonumber\\
&   & +\left(\frac{\gamma -1}{\ln2}-\frac{1}{2}\right)\left( H_n -
\frac{3}{2}\right) + b - \tilde{\tilde{\Sigma}}_n,\label{t_n} 
\end{eqnarray}
where
\begin{eqnarray}
b &:=& \sum_{k \in
\mathbb{Z}\backslash\{0\}}\frac{2\zeta(1-\chi_k)\Gamma(-\chi_k)}{(\ln2)(1-\chi_k)\Gamma(3-\chi_k)},\label{b}\\
\tilde{\tilde{\Sigma}}_n &:=& \sum_{k \in
\mathbb{Z}\backslash\{0\}}\frac{\zeta(1-\chi_k)\Gamma(-\chi_k)\Gamma(n+1)}{(\ln2)(1-\chi_k)\Gamma(n+1-\chi_k)}.\label{sigma_tilde_tilde}
\end{eqnarray}
\end{proof}

\section{Analysis of the Average Case: $\mu(\lowercase{\bar{m}},\lowercase{n})$}\label{analysisOfMuAvg}
\setlength\parindent{.5in}
\subsection{Exact Computation of $\mu(\bar{m},n)$}\label{derivationOfMuAvg}

Here we consider the parameter $m$ in $\mu(m,n)$ as a discrete
random variable with probability mass function $P\{m=i\} =
\displaystyle \frac{1}{n},\ i=1,2,\ldots,n$, and average over $m$
while the parameter $n$ is fixed.  Thus, using the notation defined
in (\ref{eqnP1}) through (\ref{anotherEqnMu}),
\begin{eqnarray}
\mu (\bar{m},n) & = & \frac{1}{n} \sum_{m=1}^{n} \mu (m,n)
     = \frac{1}{n} \sum_{m=1}^{n} \int_{0}^{1}\! \int_{s}^{1} \! \beta (s,t) P(s,t,m,n)\, dt\, ds\nonumber \\
    & = & \int_{0}^{1}\! \int_{s}^{1} \! \beta (s,t)\ \frac{1}{n} \sum_{m=1}^{n}
    P(s,t,m,n)\,
    dt\, ds
     = \mu_1(\bar{m},n) + \mu_2(\bar{m},n) +
    \mu_3(\bar{m},n),\nonumber
\end{eqnarray}
where, for $l=1,2,3,$
\begin{eqnarray}
    \mu_l(\bar{m},n) & = & \int_{0}^{1} \int_{s}^{1} \! \beta (s,t)\ \frac{1}{n} \sum_{m=1}^{n} P_l(s,t,m,n)\ dt\,
    ds.\label{Mu_1_2_3}
\end{eqnarray}
Here $\mu_1(\bar{m},n)=\mu_3(\bar{m},n)$, since
\begin{eqnarray*}
P_3(1-t',1-s',n-m'+1,n) = P_1(s',t',m',n)
\end{eqnarray*}
by an easy symmetric argument we omit, and so
\begin{eqnarray}
    \mu_3(\bar{m},n) & = & \int_{0}^{1} \int_{s}^{1} \! \beta (s,t)\ \frac{1}{n} \sum_{m=1}^{n} P_3(s,t,m,n)\ dt\, ds \nonumber\\
    & = & \int_{0}^{1} \int_{s'}^{1} \! \beta (1-t',1-s')\ \frac{1}{n} \sum_{m'=1}^{n}
    P_3(1-t',1-s',n - m' + 1,n)\ dt'\, ds' \nonumber \\
    & = & \int_{0}^{1} \int_{s'}^{1} \beta (s',t')\ \frac{1}{n} \sum_{m'=1}^{n} P_1(s',t',m',n)\ dt'\, ds' \nonumber \\
    & = & \mu_1(\bar{m},n).\nonumber
\end{eqnarray}
Therefore
\begin{eqnarray}
\mu (\bar{m},n) = 2 \mu_1(\bar{m},n) +
\mu_2(\bar{m},n),\label{eqnMuAvg}
\end{eqnarray}
and we will compute $\mu_1(\bar{m},n)$ and $\mu_2(\bar{m},n)$
exactly in Sections \ref{derivationOfMuAvg1}-2.


\subsubsection{Exact Computation of $\mu_1(\bar{m},n)$}\label{derivationOfMuAvg1}

We use the following lemma in order to compute $\mu_1(\bar{m},n)$
exactly:
\begin{lemma}\label{lemmaForMuAvg1}
\begin{eqnarray*}
\lefteqn{\int_{0}^{1}\!
\int_{s}^{1}\!\beta(s,t)\, \frac{1}{n}\sum_{m=2}^{n}P_1(s,t,m,n)\,dt\,ds}\\
&=&2\sum_{j=2}^{n-1} \frac{(-1)^j  {{n-1}\choose{j}}}{j(j-1)}
    +\frac{2}{9}\sum_{j=2}^{n-1} \frac{(-1)^j  {{n-1}\choose{j}}}{j-1}
    -2\sum_{j=3}^{n-1}B_j\frac{n-j+1-{{n-1}\choose{j-1}}}{j(j-1)(j-2)(1-2^{-j})}\\
    & &-2\sum_{j=2}^{n-1}\frac{(-1)^{j}{{n-1}\choose{j}}}{(j+1)j(j-1)(1-2^{-j})}.
\end{eqnarray*}
\end{lemma}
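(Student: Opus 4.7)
\indent The plan is to parallel the derivation of $P_1(s,t,1,n)$ in Section~\ref{derivationOfMu1}: first obtain a polynomial-in-$(s,t)$ form for $\frac{1}{n}\sum_{m=2}^{n}P_1(s,t,m,n)$, then integrate it against $\beta(s,t)$ via the dyadic decomposition (\ref{eqnMu}). Using the representation $\frac{1}{j-m+1} = \int_0^1 u^{j-m}\,du$ and, for each fixed $i$, the multinomial theorem on the resulting sum over $j$ (as in (\ref{firstPstM1})), one gets
\[
P_1(s,t,m,n) = 2n(n-1)\sum_{i=m}^{n-1}\binom{n-2}{i-1}s^{i-1}\int_0^1 u^{i+1-m}(1-t+u(t-s))^{n-i-1}\,du.
\]
Summing over $m\in\{2,\ldots,n\}$ and swapping with the $i$-sum introduces the geometric sum $\sum_{m=2}^{i}u^{i+1-m} = (u-u^i)/(1-u)$. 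Splitting this into its $u$- and $u^i$-pieces and applying the binomial theorem to each collapses the $i$-sum; after substituting $v = 1-u$ and expanding $(1-v(t-s))^{n-2}-(1-vt)^{n-2}$ binomially, a one-line $v$-integration yields
\[
\frac{1}{n}\sum_{m=2}^{n}P_1(s,t,m,n) = 2(n-1)\sum_{j=1}^{n-2}\binom{n-2}{j}\frac{(-1)^j}{j(j+1)}\bigl[(t-s)^j - t^j\bigr].
\]

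\indent Now integrate term by term against $\beta(s,t)$. For the $(t-s)^j$ contribution the integrand is translation-invariant on each dyadic rectangle of (\ref{eqnMu}), and an explicit calculation gives the closed form $1/[(j+1)(j+2)(1-2^{-(j+1)})]$. For the $t^j$ contribution, the Bernoulli-sum manipulation used in (\ref{intermediateMu1})--(\ref{partOfMu1WithAnj}) applies verbatim with the exponent $j-2$ of $t$ there replaced by $j$ here, giving
\[
\int_0^1\!\int_s^1\beta(s,t)\,t^j\,dt\,ds = \frac{1}{j+1}\Biggl[1 + \sum_{r=2}^{j+1}\frac{B_r\binom{j+1}{r-1}}{r(1-2^{-r})}\Biggr].
\]
Substituting these, reindexing $j\mapsto j-1$ so that the outer summation index runs over $\{2,\ldots,n-1\}$, and using $(n-1)\binom{n-2}{j-1} = j\binom{n-1}{j}$ immediately produces the fourth term of the statement (from the $(t-s)^j$ part) and the first term (from the constant $1$ inside the $t^j$ part).

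\indent What remains is the double Bernoulli sum arising from the $\sum_{r=2}^{j+1}$ piece. After swapping $\sum_j$ and $\sum_r$, the inner $j$-sum is handled via the factorization $\binom{n-1}{k}\binom{k}{r-1} = \binom{n-1}{r-1}\binom{n-r}{k-r+1}$ together with the partial fraction $\frac{1}{(\ell+r-2)(\ell+r-1)} = \frac{1}{\ell+r-2} - \frac{1}{\ell+r-1}$. For $r\geq 3$ both partial-fraction pieces fall under the Beta identity $\sum_{\ell=0}^{N}\binom{N}{\ell}(-1)^\ell/(\ell+a) = \Gamma(a)\Gamma(N+1)/\Gamma(N+a+1)$, and simplification produces exactly the third term of the statement (the sign factor $(-1)^r$ is immaterial because $B_r = 0$ for odd $r\geq 3$). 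The case $r=2$ is exceptional because the first partial-fraction piece would need this identity at $a=0$, where it fails; one instead uses $\sum_{\ell=1}^{N}\binom{N}{\ell}(-1)^\ell/\ell = -H_N$, and the resulting contribution $\frac{2}{9}(n-1)(H_{n-1}-1)$ matches the second term of the statement once one verifies the side identity
\[
\sum_{j=2}^{n-1}\frac{(-1)^j\binom{n-1}{j}}{j-1} = (n-1)(H_{n-1}-1),
\]
which follows from $1/(j-1) = \int_0^1 x^{j-2}\,dx$ together with the standard formula $\sum_{k=1}^{m}H_k = (m+1)H_m - m$. The principal obstacle is the bookkeeping in this last step --- tracking the reindexings, signs, and powers of $2$ while correctly isolating the exceptional $r=2$ case so that each of the four pieces produced by the calculation matches one of the four displayed terms.
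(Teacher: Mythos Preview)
Your argument is correct. The intermediate identity you derive,
\[
\frac{1}{n}\sum_{m=2}^{n}P_1(s,t,m,n)
= 2(n-1)\sum_{j=1}^{n-2}\binom{n-2}{j}\frac{(-1)^j}{j(j+1)}\bigl[(t-s)^j-t^j\bigr],
\]
is, after the reindexing $j\mapsto j-1$ and the identity $(n-1)\binom{n-2}{j-1}=j\binom{n-1}{j}$, exactly the paper's equation~(\ref{eqnP1Avg}); and your treatment of the two resulting $\beta$-integrals coincides with the paper's (\ref{partOfMu1ForAvg_2}) and (\ref{partOfMu1ForAvg_3}). Your explicit separation of the $r=2$ case from the $r\ge 3$ cases (with the side identity $\sum_{j=2}^{n-1}(-1)^j\binom{n-1}{j}/(j-1)=(n-1)(H_{n-1}-1)$, which is just the $n\to n-1$ instance of the identity used at~(\ref{eqnMu1})) makes transparent what the paper leaves implicit in the phrase ``closely following the derivations shown in (\ref{intermediateMu1})--(\ref{eqnMu1})''.

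Where you genuinely diverge from the paper is in how you reach the polynomial form for $\tfrac{1}{n}\sum_{m\ge 2}P_1$. The paper first establishes, for each individual $m\ge 2$, the integral representation of Lemma~\ref{lemmaForP1stmn}: it writes $\tfrac{1}{j-m+1}\,z^{n-j}=z^{n-m+1}\int_z^\infty\eta^{-(j-m+2)}\,d\eta$, handles the $i$-sum via the iterated-integration operator $T^{m-1}$, and arrives at $P_1(s,t,m,n)=2n\int_0^x(\xi+y)^{-2}[\Upsilon_1-\Upsilon_2+\Upsilon_3]\,d\xi$, where the $\Upsilon$'s carry a factor $\binom{n-1}{m-2}(x-\xi)^{m-2}$; only then does summing over $m$ collapse via the binomial theorem. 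You instead write $\tfrac{1}{j-m+1}=\int_0^1 u^{j-m}\,du$, sum first over $j$ (binomial), then over $m$ (geometric series $(u-u^i)/(1-u)$), then over $i$ (two more binomial sums), and finish with a single $v=1-u$ integration. This bypasses Lemma~\ref{lemmaForP1stmn} entirely and is noticeably shorter; the paper's route, on the other hand, produces a per-$m$ formula that could be reused elsewhere (e.g.\ for fixed-$m$ analyses), which your averaged approach does not.
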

Before proving the lemma, we complete the computation of
$\mu_1(\bar{m},n)$. Note that
\begin{eqnarray*}
\mu_1(\bar{m},n) &=& \int_{0}^{1} \int_{s}^{1} \! \beta (s,t)\,
\frac{1}{n} \sum_{m=1}^{n} P_1(s,t,m,n)\,dt\, ds\\
&=&\frac{1}{n}\int_{0}^{1} \int_{s}^{1} \! \beta (s,t)\,
P_1(s,t,1,n)\,dt\, ds +\int_{0}^{1} \int_{s}^{1} \! \beta (s,t)\,
\frac{1}{n}
\sum_{m=2}^{n} P_1(s,t,m,n)\,dt\, ds\\
&=&\frac{1}{n}\mu(1,n) + \int_{0}^{1} \int_{s}^{1} \! \beta (s,t)\,
\frac{1}{n} \sum_{m=2}^{n} P_1(s,t,m,n)\,dt\, ds.
\end{eqnarray*}
Therefore, by (\ref{eqnMu1}) and Lemma \ref{lemmaForMuAvg1}, we
obtain
\begin{eqnarray}
\mu_1(\bar{m},n)
    &=& \frac{2}{n}\sum_{j=2}^n \frac{(-1)^j  {{n}\choose{j}}}{j-1} +
    \frac{2}{n}\sum_{j=2}^{n-1}B_j\frac{n-j+1-{{n}\choose{j}}}{j(j-1)(1-2^{-j})}\nonumber\\
    & &+2\sum_{j=2}^{n-1} \frac{(-1)^j  {{n-1}\choose{j}}}{j(j-1)}
    +\frac{2}{9}\sum_{j=2}^{n-1} \frac{(-1)^j  {{n-1}\choose{j}}}{j-1}
    -2\sum_{j=3}^{n-1}B_j\frac{n-j+1-{{n-1}\choose{j-1}}}{j(j-1)(j-2)(1-2^{-j})}\nonumber\\
    & &-2\sum_{j=2}^{n-1}\frac{(-1)^{j}{{n-1}\choose{j}}}{(j+1)j(j-1)(1-2^{-j})}\nonumber\\
    &=& n-1- 4\sum_{j=3}^{n} \frac{(-1)^j
    {{n-1}\choose{j-1}}}{j(j-1)(j-2)} + \frac{2}{n}
    \sum_{j=2}^{n-1}B_j\frac{n-j+1-{{n}\choose{j}}}{j(j-1)(1-2^{-j})}\nonumber\\
    & &
    +\frac{2}{9}\sum_{j=2}^{n-1} \frac{(-1)^j  {{n-1}\choose{j}}}{j-1}
    -2\sum_{j=3}^{n-1}B_j\frac{n-j+1-{{n-1}\choose{j-1}}}{j(j-1)(j-2)(1-2^{-j})}\nonumber\\
    & &-2\sum_{j=2}^{n-1}\frac{(-1)^{j}{{n-1}\choose{j}}}{(j+1)j(j-1)(1-2^{-j})},\label{eqnMu1Avg_2}
\end{eqnarray}
where the second equality holds since
\begin{eqnarray*}
\lefteqn{\frac{2}{n}\sum_{j=2}^n \frac{(-1)^j  {{n}\choose{j}}}{j-1}
+2\sum_{j=2}^{n-1} \frac{(-1)^j
    {{n-1}\choose{j}}}{j(j-1)}}\\
    &=&2\sum_{j=2}^n \frac{(-1)^j(n-1)!}{j!(n-j)!(j-1)} -2\sum_{j=3}^{n}
    \frac{(-1)^j(n-1)!}{(j-1)!(n-j)!(j-1)(j-2)}\\
    &=&n-1+2\sum_{j=3}^{n}
    \frac{(-1)^j(n-1)!}{(j-1)!(n-j)!(j-1)}\left[\frac{1}{j}-\frac{1}{j-2}\right]\\
    &=&n-1-4\sum_{j=3}^{n}
    \frac{(-1)^j{{n-1}\choose{j-1}}}{j(j-1)(j-2)}.
\end{eqnarray*}

In Section \ref{derivationOfMuAvg2AndMuAvg} we combine the
expression for $\mu_1(\bar{m},n)$ in (\ref{eqnMu1Avg_2}) with a
similar expression for $\mu_2(\bar{m},n)$ to obtain an exact
expression for $\mu(\bar{m},n)$. The remainder of this section is
devoted to proving Lemma \ref{lemmaForMuAvg1}. For this, the
following expression for $P_1(s,t,m,n)$ will prove useful:
\begin{lemma}\label{lemmaForP1stmn}
Let $m \geq 2$ and let $x:=s$, $y:=t-s$, $z:=1-t$.  Then the
quantity $P_1(s,t,m,n)$ defined at $(\ref{eqnP1})$ satisfies
\begin{eqnarray}
\lefteqn{P_1(s,t,m,n)}\nonumber\\
    &=&2n\int_{0}^{x}\!\frac{1}{(\xi+y)^2}[\Upsilon_1(m,n,\xi,x,y,z)-\Upsilon_2(m,n,\xi,x,y,z)+\Upsilon_3(m,n,\xi,x,y,z)]\, d\xi,\nonumber\\
    & &\label{Pst1_4}
\end{eqnarray}
where
\begin{eqnarray*}
    \Upsilon_1(m,n,\xi,x,y,z) &:=&
    {{n-1}\choose{m-2}}(x-\xi)^{m-2}(n-m)(\xi+y+z)^{n-m+1},\\
    \Upsilon_2(m,n,\xi,x,y,z) &:=&
    {{n-1}\choose{m-2}}(x-\xi)^{m-2}(n-m+1)z(\xi+y+z)^{n-m},\\
    \Upsilon_3(m,n,\xi,x,y,z) &:=&
    {{n-1}\choose{m-2}}(x-\xi)^{m-2}z^{n-m+1}.
\end{eqnarray*}
\end{lemma}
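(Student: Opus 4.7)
The plan is to mimic the $m=1$ derivation of $P_1(s,t,1,n)$ in (\ref{firstPstM1})--(\ref{eqnPstM1}), inserting one preparatory step that peels off the common factor $\binom{n-1}{m-2}(x-\xi)^{m-2}$ appearing in $\Upsilon_1,\Upsilon_2,\Upsilon_3$. For the preparation I would use the beta identity
\[
x^{i-1}=\frac{(i-1)!}{(i-m)!(m-2)!}\int_0^x (x-\xi)^{m-2}\xi^{i-m}\,d\xi\qquad(i\ge m\ge 2)
\]
inside the definition (\ref{eqnP1}) of $P_1(s,t,m,n)$. The factor $(i-1)!$ cancels against the denominator of the multinomial coefficient; after pulling the $\xi$-integral outside and reindexing $i':=i-m+1$, $j':=j-m+1$ (so $1\le i'<j'\le N$ with $N:=n-m+1$), one reaches
\[
P_1(s,t,m,n)=\frac{2\,n!}{(m-2)!}\int_0^x (x-\xi)^{m-2}\!\!\!\sum_{1\le i'<j'\le N}\!\!\frac{\xi^{i'-1}y^{j'-i'-1}z^{N-j'}}{j'\,(i'-1)!(j'-i'-1)!(N-j')!}\,d\xi.
\]

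From here I would follow (\ref{firstPstM1}) verbatim: write $z^{N-j'}/j'=z^{N}\int_{z}^{\infty}\eta^{-N-1}\eta^{N-j'}\,d\eta$, exchange the sum and the $\eta$-integral, and collapse the resulting double sum via the same multinomial identity that produced $n(n-1)(x+y+\eta)^{n-2}$ there, now yielding $N(N-1)(\xi+y+\eta)^{N-2}$. Using the binomial arithmetic $\tfrac{n!}{(m-2)!\,(N-2)!}=n\,N(N-1)\binom{n-1}{m-2}$, this reduces $P_1(s,t,m,n)$ to
\[
2n\binom{n-1}{m-2}\!\int_0^{x}\!(x-\xi)^{m-2}\,N(N-1)\,z^{N}\!\!\int_z^{\infty}\!\eta^{-N-1}(\xi+y+\eta)^{N-2}\,d\eta\,d\xi.
\]

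The last step is to evaluate the inner $\eta$-integral by the same substitution used between (\ref{firstPstM1}) and (\ref{eqnPstM1}), namely $u=(\xi+y)/\eta+1$. Writing $w:=\xi+y$, so that $w+\eta=\eta u$, the $\eta$-integral becomes $w^{-2}\int_1^{1+w/z}(u-1)u^{N-2}\,du$, and elementary integration gives
\[
N(N-1)z^{N}\!\!\int_z^{\infty}\!\eta^{-N-1}(w+\eta)^{N-2}\,d\eta=\frac{(N-1)(\xi+y+z)^{N}-Nz(\xi+y+z)^{N-1}+z^{N}}{(\xi+y)^{2}}.
\]
Substituting $N-1=n-m$ and $N=n-m+1$, the numerator is exactly $[\Upsilon_1-\Upsilon_2+\Upsilon_3]/[\binom{n-1}{m-2}(x-\xi)^{m-2}]$, and (\ref{Pst1_4}) drops out. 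The main obstacle is bookkeeping rather than insight: one must (i) spot that the beta-integral trick in the preparatory step is what manufactures the factor $(x-\xi)^{m-2}\binom{n-1}{m-2}$ common to every $\Upsilon_i$, and (ii) verify that the polynomial $(N-1)a^{N}-Na^{N-1}+1$ vanishes to second order at $a=1$, so that the $(\xi+y)^{-2}$ factor emerges cleanly from the $\eta$-integration without leaving any residual singularity.
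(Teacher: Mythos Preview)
Your argument is correct. Both proofs hinge on the same $\eta$-integral device from (\ref{firstPstM1}) to absorb the factor $1/(j-m+1)$, and both end up evaluating the resulting $\eta$-integral via the substitution $u=(\xi+y)/\eta+1$. The difference is in how the kernel $(x-\xi)^{m-2}$ is produced. The paper introduces the iterated-integration operator $T^{m-1}$ via $T^{m-1}(x^{i-m})=\frac{(i-m)!}{(i-1)!}x^{i-1}$, carries it through the $\eta$-integration to reach (\ref{Pst1_2}), and then takes a lengthy detour: it expands the bracket there as a polynomial $\sum_r \Upsilon(m,n,r,z)\,t^{r-2}$, expands each $t^{r-2}=(x+y)^{r-2}$ by the binomial theorem, applies $T^{m-1}$ termwise to get $x^{j+m-1}/[(j+1)\cdots(j+m-1)]$, performs a partial-fraction decomposition of this product, and finally reassembles the pieces into the convolution integral $\frac{1}{(m-2)!}\int_0^x(x-\xi)^{m-2}(\xi+y)^{r-2}\,d\xi$ before summing over $r$. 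Your beta-integral step is precisely the Cauchy formula $T^{m-1}(f)(x)=\frac{1}{(m-2)!}\int_0^x(x-\xi)^{m-2}f(\xi)\,d\xi$ applied at the outset, so the kernel $(x-\xi)^{m-2}$ and the constant $\binom{n-1}{m-2}$ are in place before the $\eta$-integration, and after that integration the lemma drops out with no further manipulation. Your route is therefore appreciably shorter; the paper's route has the minor compensating virtue that the intermediate form (\ref{Pst1_2}) displays the connection to the $m=1$ result (\ref{eqnPstM1}) explicitly.
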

\begin{proof}[Proof of Lemma \ref{lemmaForP1stmn}]
By (\ref{jointDensity})--(\ref{eqnP1}),
\begin{eqnarray}
\lefteqn{P_1(s,t,m,n) 
     = \sum_{m \leq i < j \leq n} \frac{2}{j-m+1} \frac{n!}{(i-1)!(j-i-1)!(n-j)!}\ x^{i-1} y^{j-i-1} z^{n-j}}\nonumber\\
    & = & \sum_{m \leq i < j \leq n} \frac{2}{j-m+1}\frac{n!}{(n-m-1)!} {{n-m-1}\choose {i-m,j-i-1,n-j}}\frac{(i-m)!}{(i-1)!}\ x^{i-1} y^{j-i-1} z^{n-j}\nonumber\\
    & = & \frac{2\, n!}{(n-m-1)!} \sum_{m \leq i < j \leq n} \frac{1}{j-m+1}{{n-m-1}\choose {i-m,j-i-1,n-j}}\frac{(i-m)!}{(i-1)!}\ x^{i-1} y^{j-i-1}
    z^{n-j}.\nonumber\\
    & & \label{Pst1_1}
\end{eqnarray}
In order to compactly describe the derivation of (\ref{Pst1_4}), we
define the following indefinite integration operator $T$:
\begin{eqnarray*}
T(f(x)):= \int_{0}^{x}\! f(\xi)\,d\xi.
\end{eqnarray*}
We really should write $(Tf)(x)$ rather than $T(f(x))$, but we would
like to use shorthand such as $T(x^j) = \frac{x^{j+1}}{j+1}$ when
$j>-1$.
The operator $T$ treats its argument $f$ as a function of $x$; the
other variables involved in $f$ (namely, $y$ and $z$) are treated as
constants. The notation $T^l$ will denote the $l$-th iterate of $T$.
In this notation, for $m < i$,
\begin{eqnarray*}
    \frac{(i-m)!}{(i-1)!}\ x^{i-1} = T^{m-1}(x^{i-m}),
\end{eqnarray*}
and the sum in (\ref{Pst1_1}) equals
\begin{eqnarray*}
    \mbox{\large $T^{m-1}$}\left(\sum_{m \leq i < j \leq n} \frac{1}{j-m+1}{{n-m-1}\choose
    {i-m,j-i-1,n-j}}\ x^{i-m} y^{j-i-1}
    z^{n-j}\right).
\end{eqnarray*}
Here
\begin{eqnarray}
    \frac{1}{j-m+1}\ z^{n-j} =
    z^{n-m+1}\int_{z}^{\infty}\! \eta^{-(j-m+1)-1} d\eta,\nonumber
\end{eqnarray}
so
\begin{eqnarray}
\lefteqn{\mbox{\large $T^{m-1}$}\left(\sum_{m \leq i < j \leq n}
    \frac{1}{j-m+1}{{n-m-1}\choose
    {i-m,j-i-1,n-j}}\ x^{i-m} y^{j-i-1}
    z^{n-j}\right)}\nonumber\\
    &=& z^{n-m+1}\, \mbox{\large $T^{m-1}$}\left(\!\int_{z}^{\infty}\!\left[\sum_{m \leq i < j \leq n}{{n-m-1}\choose
    {i-m,j-i-1,n-j}}\ x^{i-m} y^{j-i-1} \eta^{-j+m-2} \right] d\eta \right)\nonumber\\
    &=& z^{n-m+1}\, \mbox{\large $T^{m-1}$}\left(\!\int_{z}^{\infty}\!\eta^{-n+m-2}(x+y+\eta)^{n-m-1} d\eta\
    \right)\nonumber\\
    &=& z^{n-m+1}\, \mbox{\large $T^{m-1}$}\left(\!\int_{z}^{\infty}\!\eta^{-3}\left(\frac{t}{\eta}+1\right)^{n-m-1} d\eta \right)\label{partOfPst1forAVG}
\end{eqnarray}
(note that $x+y = t$).  Making the change of variables
$v=\frac{t}{\eta}+1$ and integrating, we obtain, after some
computation,
\begin{eqnarray}
\lefteqn{\int_{z}^{\infty}\!\eta^{-3}\left(\frac{t}{\eta}+1\right)^{n-m-1} d\eta}\nonumber\\
    &=&
    \frac{1}{t^2(n-m+1)(n-m)}\left[(n-m)\left(1+\frac{t}{z}\right)^{n-m+1}-(n-m+1)\left(1+\frac{t}{z}\right)^{n-m}+1\right].\nonumber\\
    & & \label{partOfPst1forAVG_1}
\end{eqnarray}
From (\ref{Pst1_1}) and
(\ref{partOfPst1forAVG})--(\ref{partOfPst1forAVG_1}),
\begin{eqnarray}
\lefteqn{P_1(s,t,m,n)}\nonumber\\
    &=& \frac{2\,n!}{(n-m+1)!}\, \mbox{$T^{m-1}$}\left(t^{-2} [(n-m)(z+t)^{n-m+1}-(n-m+1)z(z+t)^{n-m}+z^{n-m+1}]\right).\nonumber\\
    & & \label{Pst1_2}
\end{eqnarray}
Here
\begin{eqnarray}
t^{-2}[(n-m)(z+t)^{n-m+1}-(n-m+1)z(z+t)^{n-m}+z^{n-m+1}]
    &=&\sum_{r=2}^{n-m+1}t^{r-2}\Upsilon(m,n,r,z),\nonumber\\
    &&\label{partOfPst1forAVG_2}
\end{eqnarray}
where
\begin{eqnarray}
    \Upsilon(m,n,r,z) := (n-m){{n-m+1}\choose {r}}z^{n-m+1-r}-(n-m+1){{n-m}\choose
    {r}}z^{n-m+1-r}.\label{Upsilon}
\end{eqnarray}
Then, since $t=x+y$,
\begin{eqnarray}
    \sum_{r=2}^{n-m+1}t^{r-2}\Upsilon(m,n,r,z)
    =\sum_{r=2}^{n-m+1}\Upsilon(m,n,r,z)\sum_{j=0}^{r-2}{{r-2}\choose
    {j}} x^j y^{r-2-j}.\label{partOfPst1forAVG_3}
\end{eqnarray}
From (\ref{Pst1_2})--(\ref{partOfPst1forAVG_3}),
\begin{eqnarray}
\lefteqn{P_1(s,t,m,n)}\nonumber\\
    &=&
    \frac{2\, n!}{(n-m+1)!}\, \mbox{\large $T^{m-1}$}\left(\sum_{r=2}^{n-m+1}\Upsilon(m,n,r,z)\sum_{j=0}^{r-2}{{r-2}\choose
    {j}} x^j y^{r-2-j}\right)\nonumber\\
    &=&
    \frac{2\, n!}{(n-m+1)!}\sum_{r=2}^{n-m+1}\Upsilon(m,n,r,z)\sum_{j=0}^{r-2}{{r-2}\choose
    {j}}y^{r-2-j} \mbox{$T^{m-1}$}(x^j)\nonumber\\
    &=&
    \frac{2\,n!}{(n-m+1)!}\sum_{r=2}^{n-m+1}\Upsilon(m,n,r,z)\sum_{j=0}^{r-2}{{r-2}\choose
    {j}}y^{r-2-j} \frac{x^{j+m-1}}{(j+1)\cdots (j+m-1)}.\nonumber\\
    & & \label{Pst1_3}
\end{eqnarray}
Because of the partial fraction expansion
\begin{eqnarray}
    \frac{1}{(j+1)\cdots (j+m-1)}
    =\frac{1}{(m-2)!}\sum_{l=0}^{m-2}
    \frac{(-1)^l{{m-2}\choose{l}}}{j+l+1},\nonumber
\end{eqnarray}
it follows that
\begin{eqnarray}
\lefteqn{\sum_{j=0}^{r-2}{{r-2}\choose {j}}y^{r-2-j}
    \frac{x^{j+m-1}}{(j+1)\cdots
    (j+m-1)}}\nonumber\\
    &=&\sum_{j=0}^{r-2}{{r-2}\choose
    {j}}y^{r-2-j}\frac{x^{j+m-1}}{(m-2)!} \sum_{l=0}^{m-2}
    \frac{(-1)^l{{m-2}\choose{l}}}{j+l+1}\nonumber\\
    &=&\frac{1}{(m-2)!} \sum_{l=0}^{m-2}(-1)^l{{m-2}\choose{l}}x^{m-2-l}\int_{0}^{x}\!\xi^l \sum_{j=0}^{r-2}{{r-2}\choose
    {j}}y^{r-2-j} \xi^j d\xi\nonumber\\
    &=&\frac{1}{(m-2)!} \sum_{l=0}^{m-2}(-1)^l{{m-2}\choose{l}}x^{m-2-l}\int_{0}^{x}\!\xi^l (\xi+y)^{r-2}
    d\xi\nonumber\\
    &=&\frac{1}{(m-2)!} \int_{0}^{x}\!(x-\xi)^{m-2}(\xi+y)^{r-2}
    d\xi.\label{partOfPst1forAVG_4}
\end{eqnarray}
From (\ref{Pst1_3})--(\ref{partOfPst1forAVG_4}),
\begin{eqnarray}
    P_1(s,t,m,n)
    &=&\frac{2\, n!}{(n-m+1)!(m-2)!}\sum_{r=2}^{n-m+1}\Upsilon(m,n,r,z)\int_{0}^{x}\!(x-\xi)^{m-2}(\xi+y)^{r-2}
    d\xi\nonumber\\
    &=&2n{{n-1}\choose{m-2}}\int_{0}^{x}\ \sum_{r=2}^{n-m+1}\Upsilon(m,n,r,z)(x-\xi)^{m-2}(\xi+y)^{r-2}
    d\xi\nonumber\\
    &=&2n{{n-1}\choose{m-2}}\int_{0}^{x}\!\frac{(x-\xi)^{m-2}}{(\xi+y)^{2}}\sum_{r=2}^{n-m+1}\Upsilon(m,n,r,z)(\xi+y)^{r}
    d\xi.\label{expressionForP1stmn}
\end{eqnarray}
Here, by (\ref{Upsilon}),
\begin{eqnarray}
\lefteqn{\sum_{r=2}^{n-m+1}\Upsilon(m,n,r,z)(\xi+y)^{r}}\nonumber\\
    &=&\sum_{r=2}^{n-m+1}\left[(n-m){{n-m+1}\choose {r}}z^{n-m+1-r}-(n-m+1){{n-m}\choose {r}}z^{n-m+1-r}\right](\xi+y)^{r}\nonumber\\
    &=&(n-m)\sum_{r=2}^{n-m+1}{{n-m+1}\choose {r}}(\xi+y)^{r}z^{n-m+1-r}-(n-m+1)\sum_{r=2}^{n-m+1}{{n-m}\choose {r}}(\xi+y)^{r}z^{n-m+1-r}\nonumber\\
    &=&(n-m)[(\xi+y+z)^{n-m+1}-z^{n-m+1}-(n-m+1)(\xi+y)z^{n-m}]\nonumber\\
    & &-(n-m+1)z[(\xi+y+z)^{n-m}-z^{n-m}-(n-m)(\xi+y)z^{n-m-1}]\nonumber\\
    &=&(n-m)(\xi+y+z)^{n-m+1}-(n-m+1)z(\xi+y+z)^{n-m}+z^{n-m+1}.\label{sumOfUpsilon}
\end{eqnarray}
Substitution of (\ref{sumOfUpsilon}) into
(\ref{expressionForP1stmn}) gives the desired (\ref{Pst1_4}).
\end{proof}

\begin{proof}[Proof of Lemma \ref{lemmaForMuAvg1}]
From Lemma \ref{lemmaForP1stmn}, we have
\begin{eqnarray}
\lefteqn{\frac{1}{n}\sum_{m=2}^{n}P_1(s,t,m,n)}\nonumber\\
    &=& 2\int_{0}^{x}\!\frac{1}{(\xi+y)^2}\sum_{m=2}^{n}[\Upsilon_1(m,n,\xi,x,y,z)-\Upsilon_2(m,n,\xi,x,y,z)+\Upsilon_3(m,n,\xi,x,y,z)]\, d\xi.\nonumber\\
    & &\label{partOfMu1ForAvg}
\end{eqnarray}
Here
\begin{eqnarray}
\lefteqn{\sum_{m=2}^{n}\Upsilon_1(m,n,\xi,x,y,z)
    = (\xi+y+z)^{2}\left.\frac{d}{dw}\left[\sum_{m=2}^{n}{{n-1}\choose{m-2}}(x-\xi)^{m-2}w^{n-m}\right]\right|_{w=\xi+y+z}}\nonumber\\
    &=& (\xi+y+z)^{2}\left.\frac{d}{dw}\left\{w^{-1}[(x-\xi+w)^{n-1}-(x-\xi)^{n-1}]\right\}\right|_{w=\xi+y+z}\nonumber\\
    &=& (\xi+y+z)^{2}\left.\left\{-w^{-2}[(x-\xi+w)^{n-1}-(x-\xi)^{n-1}]+w^{-1}(n-1)(x-\xi+w)^{n-2}\right\}\right|_{w=\xi+y+z}\nonumber\\
    &=& (x-\xi)^{n-1}-1+(n-1)(\xi+y+z)
\end{eqnarray}
(note that $x+y+z = 1$). Similarly,
\begin{eqnarray}
\lefteqn{\sum_{m=2}^{n}\Upsilon_2(m,n,\xi,x,y,z)
    = z\left.\frac{d}{dw}\left[\sum_{m=2}^{n}{{n-1}\choose{m-2}}(x-\xi)^{m-2}w^{n-m+1}\right]\right|_{w=\xi+y+z}}\nonumber\\
    &=& z\left.\frac{d}{dw}\left[(x-\xi+w)^{n-1}-(x-\xi)^{n-1}\right]\right|_{w=\xi+y+z}
    = z\left.\left[(n-1)(x-\xi+w)^{n-2}\right]\right|_{w=\xi+y+z}\nonumber\\
    &=& z(n-1),
\end{eqnarray}
and
\begin{eqnarray}
\sum_{m=2}^{n}\Upsilon_3(m,n,\xi,x,y,z)
    = \sum_{m=2}^{n}{{n-1}\choose{m-2}}(x-\xi)^{m-2}z^{n-m+1}
    = (x-\xi+z)^{n-1}-(x-\xi)^{n-1}.\nonumber
\end{eqnarray}
Hence
\begin{eqnarray}
\lefteqn{\sum_{m=2}^{n}[\Upsilon_1(m,n,\xi,x,y,z)-\Upsilon_2(m,n,\xi,x,y,z)+\Upsilon_3(m,n,\xi,x,y,z)]}\nonumber\\
    &&\ \ \ \ \ \ \ \ \ \ \ \ \ \ \ \ \ \ \ \ \ \ \ \ =(n-1)(\xi+y)-1+(x-\xi+z)^{n-1}.\label{intermediateMu1Avg}
\end{eqnarray}
Therefore, from (\ref{partOfMu1ForAvg}) and
(\ref{intermediateMu1Avg}), we obtain
\begin{eqnarray}
\lefteqn{\frac{1}{n} \sum_{m=2}^{n} P_1(s,t,m,n) = 2\int_{0}^{x}\!\frac{1}{(\xi+y)^2}[(n-1)(\xi+y)-1+(x-\xi+z)^{n-1}]\,d\xi}\nonumber\\
    &=&2\int_{0}^{x}\!\frac{1}{(\xi+y)^2}\{(n-1)(\xi+y)-1+[1-(\xi+y)]^{n-1}\}\,d\xi\nonumber\\
    &=&2\int_{0}^{x}\!\frac{1}{(\xi+y)^2}\sum_{j=2}^{n-1}(-1)^j{{n-1}\choose{j}}(\xi+y)^{j}\,d\xi =2\sum_{j=2}^{n-1}(-1)^j{{n-1}\choose{j}}\int_{0}^{x}\!(\xi+y)^{j-2}\,d\xi\nonumber\\
    &=&2\sum_{j=2}^{n-1}(-1)^j{{n-1}\choose{j}}\frac{(x+y)^{j-1}-y^{j-1}}{j-1} =2\sum_{j=2}^{n-1}(-1)^j{{n-1}\choose{j}}\frac{t^{j-1}-(t-s)^{j-1}}{j-1}.\label{eqnP1Avg}
\end{eqnarray}
We complete the proof by using (\ref{eqnP1Avg}) to compute
$\int_{0}^{1}\! \int_{s}^{1}\!
\beta(s,t)\,\frac{1}{n}\sum_{m=2}^{n}P_1(s,t,m,n)\,ds\,dt$. We have
\begin{eqnarray}
\lefteqn{\int_{0}^{1}\! \int_{s}^{1}\!
\beta(s,t)\,\frac{1}{n}\sum_{m=2}^{n}P_1(s,t,m,n)\,ds\,dt}\nonumber\\
    &=& 2\int_{0}^{1}\! \int_{s}^{1} \! \beta (s,t)\ \sum_{j=2}^{n-1} (-1)^j {{n-1}\choose{j}} \frac{t^{j-1}-(t-s)^{j-1}}{j-1}\,dt\,ds\nonumber\\
    &=& 2\int_{0}^{1} \int_{s}^{1} \! \beta (s,t)\ \sum_{j=2}^{n-1} (-1)^j {{n-1}\choose{j}} \frac{t^{j-1}}{j-1}\,dt\,ds\nonumber\\
    & &-2\int_{0}^{1} \int_{s}^{1} \! \beta (s,t)\ \sum_{j=2}^{n-1} (-1)^j {{n-1}\choose{j}} \frac{(t-s)^{j-1}}{j-1}\,dt\,ds.\label{eqnMu1Avg_1}
\end{eqnarray}
Closely following the derivations shown in
(\ref{intermediateMu1})--(\ref{eqnMu1}), one can show that
\begin{eqnarray}
\lefteqn{\int_{0}^{1}\! \int_{s}^{1} \! \beta (s,t)\
    \sum_{j=2}^{n-1} (-1)^j
    {{n-1}\choose{j}} \frac{t^{j-1}}{j-1}\,dt\,ds}\nonumber\\
    &=&\sum_{j=2}^{n-1} \frac{(-1)^j  {{n-1}\choose{j}}}{j(j-1)}
    +\frac{1}{9}\sum_{j=2}^{n-1} \frac{(-1)^j  {{n-1}\choose{j}}}{j-1}
    -\sum_{j=3}^{n-1}B_j\frac{n-j+1-{{n-1}\choose{j-1}}}{j(j-1)(j-2)(1-2^{-j})}.\label{partOfMu1ForAvg_2}
\end{eqnarray}
Thus, in order to complete the proof, it remains to show that
\begin{eqnarray}
    \int_{0}^{1}\! \int_{s}^{1} \! \beta (s,t)\ \sum_{j=2}^{n-1} (-1)^j {{n-1}\choose{j}}\frac{(t-s)^
    {j-1}}{{j-1}}\,dt\,ds
    =\sum_{j=2}^{n-1}\frac{(-1)^{j}{{n-1}\choose{j}}}{(j+1)j(j-1)(1-2^{-j})}.\label{partOfMu1ForAvg_3}
\end{eqnarray}
Indeed, we have
\begin{eqnarray}
\lefteqn{\int_{0}^{1}\! \int_{s}^{1} \! \beta (s,t)\
\sum_{j=2}^{n-1} (-1)^j {{n-1}\choose{j}}\frac{(t-s)^
    {j-1}}{{j-1}}\,dt\,ds}\nonumber\\
    &=&\sum_{k=0}^{\infty}(k+1)2^{k}\int_{0}^{2^{-(k+1)}}\!\int_{2^{-(k+1)}}^{2^{-k}}\sum_{j=2}^{n-1} (-1)^j {{n-1}\choose{j}}\frac{(t-s)^
    {j-1}}{{j-1}}\,dt\,ds\nonumber\\
    &=&\sum_{k=0}^{\infty}(k+1)2^{k}\int_{0}^{2^{-(k+1)}}\!\int_{2^{-(k+1)}-s}^{2^{-k}-s}\sum_{j=2}^{n-1}
    (-1)^j {{n-1}\choose{j}}\frac{v^{j-1}}{{j-1}}\,dv\,ds\nonumber\\
    &=&\sum_{k=0}^{\infty}(k+1)2^{k}\sum_{j=2}^{n-1} (-1)^j {{n-1}\choose{j}}\int_{0}^{2^{-k}}\!\frac{v^
    {j-1}}{{j-1}}\int_{[2^{-(k+1)}-v]\bigvee 0}^{(2^{-k}-v)\bigwedge
    2^{-(k+1)}}\!ds\,dv.\label{derivationSimilarToQuicksortCase1}
\end{eqnarray}
Here
\begin{eqnarray}\label{partOfDerivationSimilarToQuicksortCase1}
\int_{[2^{-(k+1)}-v]\bigvee 0}^{(2^{-k}-v)\bigwedge
    2^{-(k+1)}}\!ds
 =   \left \{ \begin{array}{cc}
                \displaystyle 
                v \ \ \ & \mbox{if $0 \leq v \leq 2^{-(k+1)}$} \\
                \displaystyle 
                2^{-k}-v \ \ \ & \mbox{if $2^{-(k+1)} < v \leq 2^{-k}$.} \\
                \end{array}
        \right.
\end{eqnarray}
Thus
\begin{eqnarray}
\int_{0}^{2^{-k}}\!\frac{v^
    {j-1}}{{j-1}}\int_{[2^{-(k+1)}-v]\bigvee 0}^{(2^{-k}-v)\bigwedge
    2^{-(k+1)}}\!ds\,dv &=& \frac{1}{j-1} \left[\int_{0}^{2^{-(k+1)}} v^{j}\,dv
    + \int_{2^{-(k+1)}}^{2^{-k}}
    v^{j-1}(2^{-k}-v)\,dv\right]\nonumber\\
    &=& \frac{2^{-k(j+1)}(1-2^{-j})}{(j+1)j(j-1)}.\label{partOfDerivationSimilarToQuicksortCase2}
\end{eqnarray}
From (\ref{derivationSimilarToQuicksortCase1}) and
(\ref{partOfDerivationSimilarToQuicksortCase2}), we obtain
\begin{eqnarray}
\lefteqn{\int_{0}^{1}\! \int_{s}^{1} \! \beta (s,t)\
\sum_{j=2}^{n-1} (-1)^j {{n-1}\choose{j}}\frac{(t-s)^
    {j-1}}{{j-1}}\,dt\,ds}\nonumber\\
    &=&\sum_{k=0}^{\infty}(k+1)2^{k}\sum_{j=2}^{n-1} (-1)^j
    {{n-1}\choose{j}}\frac{2^{-k(j+1)}(1-2^{-j})}{(j+1)j(j-1)}\nonumber\\
    &=&\sum_{j=2}^{n-1} (-1)^j
    {{n-1}\choose{j}}\frac{1-2^{-j}}{(j+1)j(j-1)}\sum_{k=0}^{\infty}(k+1)2^{-kj}\nonumber\\
    &=&\sum_{j=2}^{n-1} (-1)^j
    {{n-1}\choose{j}}\frac{1-2^{-j}}{(j+1)j(j-1)}\frac{1}{(1-2^{-j})^2}\nonumber\\
    &=&\sum_{j=2}^{n-1}\frac{(-1)^{j}{{n-1}\choose{j}}}{(j+1)j(j-1)(1-2^{-j})},\label{derivationSimilarToQuicksortCase2}
\end{eqnarray}
and (\ref{partOfMu1ForAvg_3}) is proved.
\end{proof}


\subsubsection{Exact Computation of $\mu_2(\bar{m},n)$ and $\mu(\bar{m},n)$}\label{derivationOfMuAvg2AndMuAvg}
The derivations for obtaining a computationally preferable exact
expression for $\mu_2(\bar{m},n)$ are entirely analogous to those
for $\mu_1(\bar{m},n)$ described in the previous section (Section
\ref{derivationOfMuAvg1}).  Thus we omit details.  As described in
Section \ref{derivationOfMu1}, $P_2(s,t,m,n)$ is zero for $m=1$ and
for $m=n$, so, from (\ref{Mu_1_2_3}),
\begin{eqnarray}
    \mu_2(\bar{m},n) &=& \int_{0}^{1} \int_{s}^{1} \! \beta (s,t)\ \frac{1}{n} \sum_{m=2}^{n-1} P_2(s,t,m,n)\ dt\,
    ds.\label{Mu2Avg_1}
\end{eqnarray}
Therefore we first derive a computationally desirable expression for
$\frac{1}{n}\sum_{m=2}^{n-1}P_2(s,t,m,n)$.  Again, let $x:=s,\
y:=t-s,\ z:=1-t.$ Then
\begin{eqnarray}
\lefteqn{\frac{1}{n}\sum_{m=2}^{n-1}P_2(s,t,m,n)}\nonumber\\
    &=& \frac{1}{n}\sum_{m=2}^{n-1}\sum_{1 \leq i \leq m < j \leq n} \frac{2}{j-i+1} {n\choose{i-1,1,j-i-1,1,n-j}}\ x^{i-1} y^{j-i-1}
    z^{n-j}\nonumber\\
    &=& \frac{1}{n}\sum_{m=2}^{n-1}S_1(m,n,x,y,z) - \frac{1}{n}\sum_{m=2}^{n-1}S_2(m,n,x,y,z)-\frac{1}{n}\sum_{m=2}^{n-1}S_3(m,n,x,y,z), \label{sumPst2_1}
\end{eqnarray}
where
\begin{eqnarray}
    S_1(m,n,x,y,z) &:=& \sum_{1 \leq i < j \leq n} \frac{2}{j-i+1} {n\choose{i-1,1,j-i-1,1,n-j}}\ x^{i-1} y^{j-i-1}
    z^{n-j},\nonumber\\
    S_2(m,n,x,y,z) &:=& \sum_{m \leq i < j \leq n} \frac{2}{j-i+1} {n\choose{i-1,1,j-i-1,1,n-j}}\ x^{i-1} y^{j-i-1}
    z^{n-j},\nonumber\\
    S_3(m,n,x,y,z) &:=& \sum_{1 \leq i < j \leq m} \frac{2}{j-i+1} {n\choose{i-1,1,j-i-1,1,n-j}}\ x^{i-1} y^{j-i-1}
    z^{n-j}.\nonumber
\end{eqnarray}
Fill and Janson \cite{fj2004} showed that $S_1(m,n,x,y,z) =
2\sum_{j=2}^{n}(-1)^j{{n}\choose{j}}(t-s)^{j-2}$.  Hence
\begin{eqnarray}
    \frac{1}{n}\sum_{m=2}^{n-1}S_1(m,n,x,y,z) = \frac{2(n-2)}{n}\sum_{j=2}^{n}(-1)^j{{n}\choose{j}}(t-s)^{j-2}.\label{sumS1}
\end{eqnarray}
Following the derivations shown in (\ref{Pst1_1}) through
(\ref{eqnP1Avg}), one can show that
\begin{eqnarray}
\frac{1}{n}\sum_{m=2}^{n-1}S_2(m,n,x,y,z) &=& 2y^{-2}x[(x+z)^{n-1}-1+y(n-1)]\label{usefulExpressionForS3}\\
    &=&2(t-s)^{-2}s\{[1-(t-s)]^{n-1}-1+(t-s)(n-1)\}\nonumber\\
    &=&2s\sum_{j=2}^{n-1}(-1)^{j}{{n-1}\choose{j}}(t-s)^{j-2}.\label{sumS2}
\end{eqnarray}
To obtain a similar expression for
$\frac{1}{n}\sum_{m=2}^{n-1}S_3(m,n,x,y,z)$, we note that, letting
$m':=n+1-m,\ i':=n+1-j,\ j':=n+1-i$,
\begin{eqnarray*}
    S_3(m,n,x,y,z)
    &=&\sum_{m' \leq i' < j' \leq n} \frac{2}{j'-i'+1} {n\choose{n-j',1,j'-i'-1,1,i'-1}}\ x^{n-j'} y^{j'-i'-1}
    z^{i'-1}\\
    &=&S_2(n+1-m,n,z,y,x).
\end{eqnarray*}
Thus
\begin{eqnarray}
    \frac{1}{n}\sum_{m=2}^{n-1}S_3(m,n,x,y,z)&=&\frac{1}{n}\sum_{m=2}^{n-1}S_2(n+1-m,n,z,y,x)\nonumber\\
    &=&\frac{1}{n}\sum_{m=2}^{n-1}S_2(m,n,z,y,x).\label{S1andS2}
\end{eqnarray}
Inspecting (\ref{usefulExpressionForS3})--(\ref{S1andS2}), we find
\begin{eqnarray}
    \frac{1}{n}\sum_{m=2}^{n-1}S_3(m,n,x,y,z)
    =2(1-t)\sum_{j=2}^{n-1}(-1)^{j}{{n-1}\choose{j}}(t-s)^{j-2}.\label{sumS3}
\end{eqnarray}
From (\ref{sumPst2_1}), (\ref{sumS1}), (\ref{sumS2}), and
(\ref{sumS3}),
\begin{eqnarray}
    \frac{1}{n} \sum_{m=1}^{n-1} P_2(s,t,m,n)
    &=&\frac{2(n-2)}{n} \sum_{j=2}^{n} (-1)^j {{n}\choose{j}} (t-s)^{j-2}
     - 2s\sum_{j=2}^{n-1} (-1)^j {{n-1}\choose{j}} (t-s)^{j-2}\nonumber\\
    & &- 2(1-t)\sum_{j=2}^{n-1} (-1)^j {{n-1}\choose{j}} (t-s)^{j-2}\nonumber\\
    &=& \frac{2(n-2)}{n} \sum_{j=2}^{n} (-1)^j {{n}\choose{j}} (t-s)^{j-2}
         -2\sum_{j=2}^{n-1} (-1)^j {{n-1}\choose{j}} (t-s)^{j-2}\nonumber\\
    & &+2\sum_{j=2}^{n-1} (-1)^j {{n-1}\choose{j}} (t-s)^{j-1}.\label{eqnP2Avg}
\end{eqnarray}
Hence, from (\ref{Mu2Avg_1}) and (\ref{eqnP2Avg}),
\begin{eqnarray}
    \mu_2(\bar{m},n)&=&\frac{2(n-2)}{n} \int_{0}^{1}\!\int_{s}^{1}\!\beta(s,t)\sum_{j=2}^{n} (-1)^j {{n}\choose{j}} (t-s)^{j-2}\,dt\,ds\nonumber \\
    &  & - 2\int_{0}^{1}\!\int_{s}^{1} \beta(s,t) \sum_{j=2}^{n-1} (-1)^j {{n-1}\choose{j}}
    (t-s)^{j-2}\,dt\,ds\nonumber\\
    &  & + 2\int_{0}^{1}\!\int_{s}^{1}\!\beta(s,t)\sum_{j=2}^{n-1} (-1)^j {{n-1}\choose{j}}
    (t-s)^{j-1}\,dt\,ds.\label{Mu2Avg_2}
\end{eqnarray}
Fill and Janson \cite{fj2004} showed that
\begin{eqnarray}
    \int_{0}^{1}\!\int_{s}^{1}\beta(s,t)\sum_{j=2}^{n} (-1)^j {{n}\choose{j}} (t-s)^{j-2}\,dt\,ds
    =\sum_{j=2}^{n}\frac{(-1)^{j}{{n}\choose{j}}}{j(j-1)[1-2^{-(j-1)}]}.\label{partOfMu2Avg_1}
\end{eqnarray}
A careful term-by-term inspection of the derivations shown in
(\ref{derivationSimilarToQuicksortCase1})--(\ref{derivationSimilarToQuicksortCase2})
reveals that
\begin{eqnarray}
    \int_{0}^{1}\!\int_{s}^{1}\!\beta(s,t)\sum_{j=2}^{n-1} (-1)^j {{n-1}\choose{j}}
    (t-s)^{j-2}\,dt\,ds &=&
    \sum_{j=2}^{n-1}\frac{(-1)^{j}{{n-1}\choose{j}}}{j(j-1)[1-2^{-(j-1)}]},\label{partOfMu2Avg_2}\\
    \int_{0}^{1}\!\int_{s}^{1}\!\beta(s,t)\sum_{j=2}^{n-1} (-1)^j {{n-1}\choose{j}}
    (t-s)^{j-1}\,dt\,ds &=&
    \sum_{j=2}^{n-1}\frac{(-1)^{j}{{n-1}\choose{j}}}{j(j+1)(1-2^{-j})}.\label{partOfMu2Avg_3}
\end{eqnarray}
Combining (\ref{Mu2Avg_2})--(\ref{partOfMu2Avg_3}), we obtain
\begin{eqnarray}
    \mu_2(\bar{m},n) &=&\frac{2(n-2)}{n}\sum_{j=0}^{n-2}\frac{(-1)^{j}{{n}\choose{j+2}}}{(j+1)(j+2)[1-2^{-(j+1)}]}
    -2\sum_{j=2}^{n}\frac{(-1)^{j}{{n}\choose{j}}}{j(j-1)[1-2^{-(j-1)}]}+2(n-1)\nonumber\\
    &=&-\frac{4}{n}\sum_{j=2}^{n}\frac{(-1)^{j}{{n}\choose{j}}}{j(j-1)[1-2^{-(j-1)}]}+2(n-1).\label{Mu2Avg_3}
\end{eqnarray}

Finally, we complete the exact computation of $\mu(\bar{m},n)$. From
(\ref{eqnMuAvg}), (\ref{eqnMu1Avg_2}), and (\ref{Mu2Avg_3}), we have
\begin{eqnarray}
\lefteqn{\mu (\bar{m},n) =
    2\mu_1(\bar{m},n)+\mu_2(\bar{m},n)}\nonumber\\
    &=& 2(n-1)-8\sum_{j=3}^{n} \frac{(-1)^j
    {{n-1}\choose{j-1}}}{j(j-1)(j-2)} + \frac{4}{n}
    \sum_{j=2}^{n-1}B_j\frac{n-j+1-{{n}\choose{j}}}{j(j-1)(1-2^{-j})}\nonumber\\
    & &
    +\frac{4}{9}\sum_{j=2}^{n-1} \frac{(-1)^j  {{n-1}\choose{j}}}{j-1}
    -4\sum_{j=3}^{n-1}B_j\frac{n-j+1-{{n-1}\choose{j-1}}}{j(j-1)(j-2)(1-2^{-j})}
    -4\sum_{j=2}^{n-1}\frac{(-1)^{j}{{n-1}\choose{j}}}{(j+1)j(j-1)(1-2^{-j})}\nonumber\\
    &
    &-\frac{4}{n}\sum_{j=2}^{n}\frac{(-1)^{j}{{n}\choose{j}}}{j(j-1)[1-2^{-(j-1)}]}+2(n-1).\label{intermediateExactMuAvg}
\end{eqnarray}
We rewrite or combine some of the terms in
(\ref{intermediateExactMuAvg}) for the asymptotic analysis of
$\mu(\bar{m},n)$ described in the next section. We define
\begin{eqnarray}
    F_1(n) &:=& \sum_{j=3}^{n}
    \frac{(-1)^j{{n}\choose{j}}}{(j-1)(j-2)},\nonumber\\
    F_2(n) &:=& \sum_{j=2}^{n-1}\frac{B_j}{j(1-2^{-j})}\left[\frac{n-{{n}\choose{j}}}{j-1}-1\right],\nonumber\\
    F_3(n) &:=& \sum_{j=2}^{n-1}
    \frac{(-1)^j{{n-1}\choose{j}}}{j-1},\nonumber\\
    F_4(n) &:=& \sum_{j=3}^{n-1}\frac{B_j}{j(j-1)(1-2^{-j})}\left[\frac{n-1-{{n-1}\choose{j-1}}}{j-2}-1\right],\nonumber\\
    F_5(n) &:=& \sum_{j=3}^{n} \frac{(-1)^j{{n}\choose{j}}}{j(j-1)(j-2)[1 -
    2^{-(j-1)}]}.\nonumber
\end{eqnarray}
The second, third, fourth, and fifth terms in
(\ref{intermediateExactMuAvg}) can be written as
$-\frac{8}{n}F_1(n)$, $\frac{4}{n} F_2(n)$, $\frac{4}{9} F_3(n)$,
and $-4 F_4(n)$, respectively. The last three terms in
(\ref{intermediateExactMuAvg}) can be combined as follows:
\begin{eqnarray}
\lefteqn{-4\sum_{j=2}^{n-1}\frac{(-1)^{j}{{n-1}\choose{j}}}{(j+1)j(j-1)(1-2^{-j})}
    -\frac{4}{n}\sum_{j=2}^{n}\frac{(-1)^{j}{{n}\choose{j}}}{j(j-1)[1-2^{-(j-1)}]}+2(n-1)}\nonumber\\
    & &\ \ \ \ \ = \frac{4}{n}\sum_{j=3}^{n}\frac{(-1)^{j}{{n}\choose{j}}}{(j-1)(j-2)[1-2^{-(j-1)}]}
    -\frac{4}{n}\sum_{j=2}^{n}\frac{(-1)^{j}{{n}\choose{j}}}{j(j-1)[1-2^{-(j-1)}]}+2(n-1)\nonumber\\
    & &\ \ \ \ \ =\frac{8}{n}\sum_{j=3}^{n}\frac{(-1)^{j}{{n}\choose{j}}}{j(j-1)(j-2)[1-2^{-(j-1)}]}
    = \frac{8}{n} F_5(n).\nonumber
\end{eqnarray}
Therefore
\begin{eqnarray}
    \mu(\bar{m},n) = 2(n-1) - \mbox{$\frac{8}{n}$} F_1(n) + \mbox{$\frac{4}{n}$} F_2(n) + \mbox{$\frac{4}{9}$} F_3(n) - 4 F_4(n)
    + \mbox{$\frac{8}{n}$}F_5(n).\label{eqnMuAvgFinal}
\end{eqnarray}


\subsection{Asymptotic Analysis of $\mu(\bar{m},n)$}

We derive an asymptotic expression for $\mu(\bar{m},n)$ shown in
(\ref{eqnMuAvgFinal}). The computations described in this section
are analogous to those in Section \ref{asymptoticAnalysisOfMu1}.
Hence we merely sketch details to derive the asymptotic expression.
First, we analyze $F_1(n)$. A routine complex-analytical argument
similar to (but much easier than) the one described in Section
\ref{asymptoticAnalysisOfMu1} shows that
\begin{eqnarray}
  F_1(n) &=&
  (-1)^{n+1}\sum_{k=0}^{2}\textrm{Res}_{s=k}\left[\frac{n!}{s(s-1)^2(s-2)^2(s-3)\cdots(s-n)}\right]\nonumber\\
  &=&(-1)^{n+1}\left[\frac{(-1)^n}{2} + (-1)^n n H_{n-1} +
  \frac{(-1)^n}{2}n(n-1)\left(H_{n-2}-\frac{5}{2}\right)\right]\nonumber\\
  &=&
  -\frac{1}{2}n(n-1)H_{n-2}+\frac{5}{4}n(n-1)-nH_{n-1} -
  \frac{1}{2}\nonumber\\
  &=&-\frac{1}{2}n^2 \ln n +
  \left(\frac{5}{4}-\frac{\gamma}{2}\right)n^2 - n \ln n +
  \frac{n^2}{2(n-1)}-(\gamma+1)n+O(1).\label{F1}
\end{eqnarray}

Since $F_2(n)$ is equal to $t_n$, which is defined at (\ref{t_n1})
and analyzed in Section \ref{asymptoticAnalysisOfMu1}, we already
have an asymptotic expression for $F_2(n)$. Next we derive an
asymptotic expression for $F_3(n)$:
\begin{eqnarray}
F_3(n)&=&(-1)^n \sum_{k=0}^{1} \textrm{Res}_{s=k} \left\{\frac{(n-1)!}{s(s-1)^2(s-2)\cdots[s-(n-1)]}\right\}\nonumber\\
&=&nH_{n-2}-n-H_{n-2}+2\nonumber\\
&=&n\ln n + (\gamma-1)n-\ln n + O(1).\label{F3}
\end{eqnarray}

To obtain an asymptotic expression for $F_4(n)$, we closely follow
the approach of Section \ref{asymptoticAnalysisOfMu1}. Let
$\tilde{u}_n := F_4(n+1) - F_4(n)$. Then
\begin{eqnarray*}
\tilde{u}_n=-\sum_{j=3}^{n}\frac{B_j}{j(j-1)(j-2)(1-2^{-j})}\left[{{n-1}\choose{j-2}}-1\right].
\end{eqnarray*}
Let $\tilde{v}_n:=\tilde{u}_{n+1}-\tilde{u}_n$.  Then, by
computations similar to those performed for $v_n$ in Section
\ref{asymptoticAnalysisOfMu1},
\begin{eqnarray*}
\tilde{v}_n
&=&-\sum_{k=0}^{n-2}(-1)^k\frac{\zeta(-2-k)}{(k+2)(k+1)[1-2^{-(k+3)}]}{{n-1}\choose{k}}\nonumber\\
&=&(-1)^{n+1}\sum_{k=1}^{3}\textrm{Res}_{s=-k}\left\{
\frac{\zeta(-2-s)}{(s+2)(s+1)[1-2^{-(s+3)}]}\frac{(n-1)!}{s(s-1)\cdots[s-(n-1)]}\right\}\nonumber\\
& &+(-1)^{n+1}\sum_{k\in
\mathbb{Z}\backslash\{0\}}\textrm{Res}_{s=-3+\chi_k}\left\{
\frac{\zeta(-2-s)}{(s+2)(s+1)[1-2^{-(s+3)}]}\frac{(n-1)!}{s(s-1)\cdots[s-(n-1)]}\right\}\nonumber\\
&=&\frac{1}{9n}-\frac{1}{n(n+1)}
-\frac{1}{n(n+1)(n+2)}\left[\frac{\gamma}{\ln
2}-\frac{1}{2}-\frac{H_{n+2}}{n+2}\right] - \xi_n,
\end{eqnarray*}
where
\begin{eqnarray*}
\xi_n := \sum_{k \in
\mathbb{Z}\backslash\{0\}}\frac{\zeta(1-\chi_k)\Gamma(1-\chi_k)\Gamma(n)}{(\ln
2)\Gamma(n+3-\chi_k)}.
\end{eqnarray*}
Hence
\begin{eqnarray*}
\tilde{u}_n &=& \tilde{u}_2+\sum_{j=2}^{n-1}\tilde{v}_n\nonumber\\
&=&\frac{1}{9}H_{n-1}+\tilde{a}+\tilde{\xi}_n-\frac{1}{2\ln
2}\left(\frac{H_n}{n}-\frac{H_{n+1}}{n+1}\right)+\frac{1}{n}-\frac{3+\ln
2-2\gamma}{4\ln2}\frac{1}{n(n+1)},\nonumber\\
\end{eqnarray*}
where
\begin{eqnarray*}
\tilde{a}&:=&\frac{7}{36\ln 2}-\frac{41}{72}-\frac{\gamma}{12\ln
2}-\sum_{k \in
Z\backslash\{0\}}\frac{\zeta(1-\chi_k)\Gamma(1-\chi_k)}{(\ln
2)(2-\chi_k)\Gamma(4-\chi_k)},\nonumber\\
\tilde{\xi}_n&:=&\sum_{k \in
Z\backslash\{0\}}\frac{\zeta(1-\chi_k)\Gamma(1-\chi_k)\Gamma(n)}{(\ln
2)(2-\chi_k)\Gamma(n+2-\chi_k)}.\nonumber
\end{eqnarray*}
Thus
\begin{eqnarray}
F_4(n) &=& F_4(2) + \sum_{j=2}^{n-1}\tilde{u}_j\nonumber\\
&=&\frac{1}{9}nH_{n-1}+\frac{8}{9}H_{n-1}+ \left(
\tilde{a}-\frac{1}{9} \right) n-\frac{8}{9}-\frac{3}{8\ln
2}-\frac{3+\ln 2
-2\gamma}{8\ln2}-2 \tilde{a} + \tilde{b} -\tilde{\tilde{\xi}}_n\nonumber\\
& &
+\frac{1}{2\ln2}\frac{H_n}{n}+\frac{3+\ln2-2\gamma}{4\ln2}\frac{1}{n},
\end{eqnarray}
where
\begin{eqnarray}
\tilde{b} &:=&\sum_{k \in
Z\backslash\{0\}}\frac{\zeta(1-\chi_k)\Gamma(1-\chi_k)}{(\ln
2)(2-\chi_k)(1-\chi_k)\Gamma(3-\chi_k)},\nonumber\\
\tilde{\tilde{\xi}}_n&:=&\sum_{k \in
Z\backslash\{0\}}\frac{\zeta(1-\chi_k)\Gamma(1-\chi_k)\Gamma(n)}{(\ln
2)(2-\chi_k)(1-\chi_k)\Gamma(n+1-\chi_k)}.\nonumber
\end{eqnarray}
Therefore
\begin{eqnarray}
F_4(n)=\frac{1}{9}n\ln n + \left( \tilde{a}
+\frac{1}{9}\gamma-\frac{1}{9} \right) n+\frac{8}{9}\ln n +
O(1).\label{F4}
\end{eqnarray}

Finally, we analyze $F_5(n)$. By computations that are entirely
analogous to those performed for $F_1(n),\ F_2(n)$, and $F_4(n)$,
\begin{eqnarray}
F_5(n)&=&(-1)^{n+1}\sum_{k=0}^{2}\textrm{Res}_{s=k}\left\{
\frac{n!}{[1-2^{-(s-1)}]s^2(s-1)^2(s-2)^2(s-3)\cdots(s-n)}\right\}\nonumber\\
& &+(-1)^{n+1}\sum_{k\in
Z\backslash\{0\}}\textrm{Res}_{s=1+\chi_k}\left\{
\frac{n!}{[1-2^{-(s-1)}]s^2(s-1)^2(s-2)^2(s-3)\cdots(s-n)}\right\}\nonumber\\
&=&\frac{1}{4}(2H_n+3+4\ln2)-\frac{n(n-1)}{2}(H_{n-2}-\ln2-3)\nonumber\\
& &-
n\left[\frac{1}{2\ln2}(H_{n-1})^2+\left(\frac{1}{2}-\frac{1}{\ln2}\right)H_{n-1}+\frac{1}{2\ln2}
H_{n-1}^{(2)}+\frac{2}{\ln2}+\frac{\ln2}{12}-\frac{1}{2}\right]\nonumber\\
& &+\sum_{k \in
\mathbb{Z}\backslash\{0\}}\frac{\Gamma(-1-\chi_k)\Gamma(n+1)}{(\ln
2)\chi_k(\chi_k^2-1)\Gamma(n-1-\chi_k)}\nonumber\\
&=&-\frac{1}{2}n^2\ln n +
\frac{3+\ln2-\gamma}{2}n^2-\frac{1}{2\ln2}n(\ln n)^2
+\left(\frac{1}{\ln2}-\frac{1}{2}\right)n\ln n + O(n).\label{F5}
\end{eqnarray}

Therefore, from (\ref{eqnMuAvgFinal})--(\ref{F3}) and
(\ref{F4})--(\ref{F5}), we obtain the following asymptotic formula
for $\mu(\bar{m},n)$:
\begin{eqnarray}
\mu(\bar{m},n)=4(1+\ln2-\tilde{a})n-\frac{4}{\ln2}(\ln n)^2 +
4\left(\frac{2}{\ln2}-1\right)\ln n +
O(1).\label{asymptoticFormulaForMuAvg}
\end{eqnarray}
The asymptotic slope $4(1+\ln2-\tilde{a})$ is approximately 8.20731.

\section{Derivation of a Closed Formula for $\mu(\lowercase{m},\lowercase{n})$}\label{closedForm}
\setlength\parindent{.5in}
\graphicspath{{closedForm/}}
The exact expression for $\mu (m,n)$
obtained in Section \ref{preliminaries} [see (\ref{eqnMu})] involves
infinite summation and integration.  Hence it is not a preferable
form for numerically computing the expectation.  In this section, we
establish another exact expression for $\mu (m,n)$ that only
involves finite summation. We also use the formula to compute $\mu
(m,n)$ for $m = 1,\ldots,n$, $n=2,\ldots,20$. \\
\indent As described in Section \ref{preliminaries}, it follows from
equations (\ref{eqnP})--(\ref{eqnMu}) that
\begin{eqnarray}
  \mu(m,n) &=& \mu_1(m,n) + \mu_2(m,n) + \mu_3(m,n),
\end{eqnarray}
where, for $q = 1,2,3,$
\begin{eqnarray}
\mu_q(m,n):=\sum_{k=0}^{\infty} \sum_{l=1}^{2^k} \int_{s =
(l-1)2^{-k}}^{(l-\frac{1}{2})2^{-k}} \int_{t =
(l-\frac{1}{2})2^{-k}}^{l2^{-k}} (k+1) P_q(s,t,m,n)\ dt\ ds.
\label{eqnMu_1}
\end{eqnarray}
The same technique can be applied to eliminate the infinite
summation and integration from each $\mu_q(m,n)$. We describe the
technique for obtaining a closed expression of $\mu_1(m,n)$ in
detail.\\
\indent First, we transform $P_1(s,t,m,n)$ shown in (\ref{eqnP1}) so
that we can eliminate the integration in $\mu_1 (m,n)$. Define
\begin{equation}
    C_1(i,j) := I\{1 \leq m \leq i < j \leq
    n\}\frac{2}{j-m+1}{{n}\choose{i-1,1,j-i-1,1,n-j}},\label{C1}
\end{equation}
where $I\{1 \leq m \leq i < j \leq n\}$ is an indicator function
that equals 1 if the event in braces holds and 0 otherwise. Since
\begin{eqnarray*}
\lefteqn{s^{i-1}(t-s)^{j-i-1}(1-t)^{n-j}}\nonumber\\
&=&s^{i-1}\sum_{u=0}^{j-i-1}{{j-i-1}\choose{u}}t^u(-1)^{j-i-1-u}s^{j-i-1-u}\sum_{v=0}^{n-j}{{n-j}\choose{v}}(-1)^{n-j-v}t^{n-j-v},
\end{eqnarray*}
it follows that
\begin{eqnarray}
    P_1(s,t,m,n) &=& \sum_{m \leq i < j \leq n} C_1(i,j) \sum_{u=0}^{j-i-1} \sum_{v = 0}^{n-j} {{j-i-1}\choose{u}}{{n-j}\choose{v}}s^{j-u-2}t^{n-j-v+u}(-1)^{n-i-u-v-1}\nonumber \\
    &=& \sum_{m \leq i < j \leq n} C_1(i,j) \sum_{f=i-1}^{j-2} \sum_{h = j-f-2}^{n-f-2} s^f t^h {{j-i-1}\choose{f-i+1}}{{n-j}\choose{h-j+f+2}}
    (-1)^{h-i-j+1}\nonumber\\
    & = & \sum_{f=m-1}^{n-2} \sum_{h = 0}^{n-f-2} s^f t^h C_2(f,h),\label{eqnP1withC2}
\end{eqnarray}
where 
\begin{equation*}
    C_2(f,h) := \sum_{i=m}^{f+1} \sum_{j = f+2}^{f+h+2} C_1(i,j){{j-i-1}\choose{f-i+1}}{{n-j}\choose{h-j+f+2}} (-1)^{h-i-j+1}.
\end{equation*}
Thus, from (\ref{eqnMu_1}) and (\ref{eqnP1withC2}), we can eliminate
the integration in $\mu_1(m,n)$ and express it using polynomials in
$l$:
\begin{eqnarray}
\lefteqn{\mu_1(m,n)}\nonumber\\
    &=&\sum_{f=m-1}^{n-2} \sum_{h = 0}^{n-f-2} C_3(f,h) \sum_{k=0}^{\infty} (k+1) \sum_{l=1}^{2^k}
    2^{-k(f+h+2)}[ l^{h+1}-( l-\mbox{$\frac{1}{2}$} )^{h+1}] [
    (l-\mbox{$\frac{1}{2}$})^{f+1}-(l-1)^{f+1}],\nonumber\\
    & &\label{eqnMu_1WithC3}
\end{eqnarray}
where
\begin{equation*}
    C_3(f,h) := \frac{1}{(n+1)(f+1)}C_2(f,h).
\end{equation*}
Note that
\begin{eqnarray*}
l^{h+1}-\left( l-\frac{1}{2} \right)^{h+1} &=&
-\sum_{j=0}^{h}{{h+1}\choose{j}}l^j\left(-\frac{1}{2}\right)^{h+1-j},\\
\left(l-\frac{1}{2} \right)^{f+1}-(l-1)^{f+1} &=&
-\sum_{j'=0}^{f}{{f+1}\choose{j'}}l^{j'}(-1)^{f+1-j'}\left[1-\left(\frac{1}{2}\right)^{f+1-j'}\right].\nonumber
\end{eqnarray*}
Hence
\begin{eqnarray}
\lefteqn{\left[l^{h+1}-\left( l-\frac{1}{2}
\right)^{h+1}\right]\left[\left(l-\frac{1}{2}
\right)^{f+1}-(l-1)^{f+1}\right]}\nonumber\\
&=&\sum_{j'=0}^{f}\sum_{j=0}^{h}{{f+1}\choose{j'}}{{h+1}\choose{j}}(-1)^{f+h-j'-j}
\left[1-\left(\frac{1}{2}\right)^{f+1-j'}\right]\left(\frac{1}{2}\right)^{h+1-j}l^{j'+j},\nonumber
\end{eqnarray}
which can be rearranged to
\begin{eqnarray}
\sum_{j=1}^{f+h+1}C_4(f,h,j)l^{j-1},\label{partOfMu1withC3}
\end{eqnarray}
 where 
\begin{eqnarray}
\lefteqn{C_4(f,h,j)}\nonumber\\
    &:=& (-1)^{f+h-j+1} \left(\frac{1}{2}\right)^{h-j+2} \sum_{j'=0\bigvee(j-1-h)}^{(j-1)\bigwedge f}
    {{f+1}\choose{j'}}{{h+1}\choose{j-1-j'}}
    \left[1-\left(\frac{1}{2}\right)^{f+1-j'}\right]\left(\frac{1}{2}\right)^{j'}.\nonumber
\end{eqnarray}
Therefore, from (\ref{eqnMu_1WithC3})--(\ref{partOfMu1withC3}), we
obtain
\begin{eqnarray}
\mu_1(m,n) &=& \sum_{f=m-1}^{n-2} \sum_{h = 0}^{n-f-2} C_3(f,h)
\sum_{k=0}^{\infty}
(k+1) \sum_{l=1}^{2^k} 2^{-k(f+h+2)}\sum_{j=1}^{f+h+1} C_4(f,h,j) l^{j-1}\nonumber\\
&=&\sum_{f=m-1}^{n-2} \sum_{h = 0}^{n-f-2} \sum_{j=1}^{f+h+1}
C_5(f,h,j) \sum_{k=0}^{\infty} (k+1) 2^{-k(f+h+2)} \sum_{l=1}^{2^k}
  l^{j-1},\nonumber
\end{eqnarray}
where
\begin{equation}\label{c5ForMu1}
    C_5(f,h,j) := C_3(f,h) \cdot C_4(f,h,j).\nonumber
\end{equation}
Here, as described in Section \ref{derivationOfMu1},
\begin{eqnarray*}
\sum_{l=1}^{2^k}l^{j-1}=\sum_{r=0}^{j-1}a_{j,r}2^{k(j-r)},
\end{eqnarray*}
where $a_{j,r}$ is defined by (\ref{a_jr}).  Now define
\begin{eqnarray*}
C_6(f,h,j,r) := a_{j,r}\ C_5(f,h,j).
\end{eqnarray*}
Then
\begin{eqnarray}
  \mu_1(m,n) &=& \sum_{f=m-1}^{n-2} \sum_{h = 0}^{n-f-2} \sum_{j=1}^{f+h+1}\sum_{r=0}^{j-1} a_{j,r}\, C_5(f,h,j) \sum_{k=0}^{\infty} (k+1)
  2^{-k(f+h+2+r-j)}\nonumber\\
  &=& \sum_{f=m-1}^{n-2} \sum_{h = 0}^{n-f-2} \sum_{j=1}^{f+h+1} \sum_{r=0}^{j-1} C_6(f,h,j,r) [1 -
  2^{-(f+h+2+r-j)}]^{-2}\nonumber\\
  &=& \sum_{a=1}^{n-1}  C_7(a) (1 - 2^{-a})^{-2},
\end{eqnarray}
where
\begin{eqnarray*}
  C_7(a) := \sum_{f=m-1}^{n-2} \sum_{h = \alpha}^{n-f-2} \sum_{j = \beta}^{f+h+1} C_6(f,h,j, a+j-(f+h+2)),
\end{eqnarray*}
in which $\alpha:=0\bigvee(a-f-1)$ and $\beta := 1\bigvee(f+h+2-a)$.\\
\indent The procedure described above can be applied to derive
analogous exact formulae for $\mu_2(m,n)$ and $\mu_3(m,n)$. In order
to derive the analogous exact formula for $\mu_2(m,n)$, one need
only start the derivation by changing the indicator function in
$C_1(i,j)$ [see (\ref{C1})] to $I\{1 \leq i < m < j \leq n\}$ and
follow each step of the procedure; for $\mu_3(m,n)$, start the
derivation by changing the indicator function to $I\{1 \leq i < j
\leq m \leq
n\}$.\\
\indent Using the closed exact formulae of $\mu_1(m,n)$,
$\mu_2(m,n)$, and $\mu_3(m,n)$, we computed $\mu(m,n)$ for $n =
2,3,\ldots,20$ and $m=1,2,\ldots,n$. Figure \ref{expectation} shows
the results, which suggest the following: (i) for fixed $n$, $\mu
(m,n)$ increases in $m$ for $m \leq \frac{n+1}{2}$ and is symmetric
about $\frac{n+1}{2}$;  (ii) for fixed $m$, $\mu (m,n)$ increases in
$n$ (asymptotically linearly).

\begin{figure}[!htbp]
\centering \vspace{1.0cm}
\includegraphics[height = 12cm]{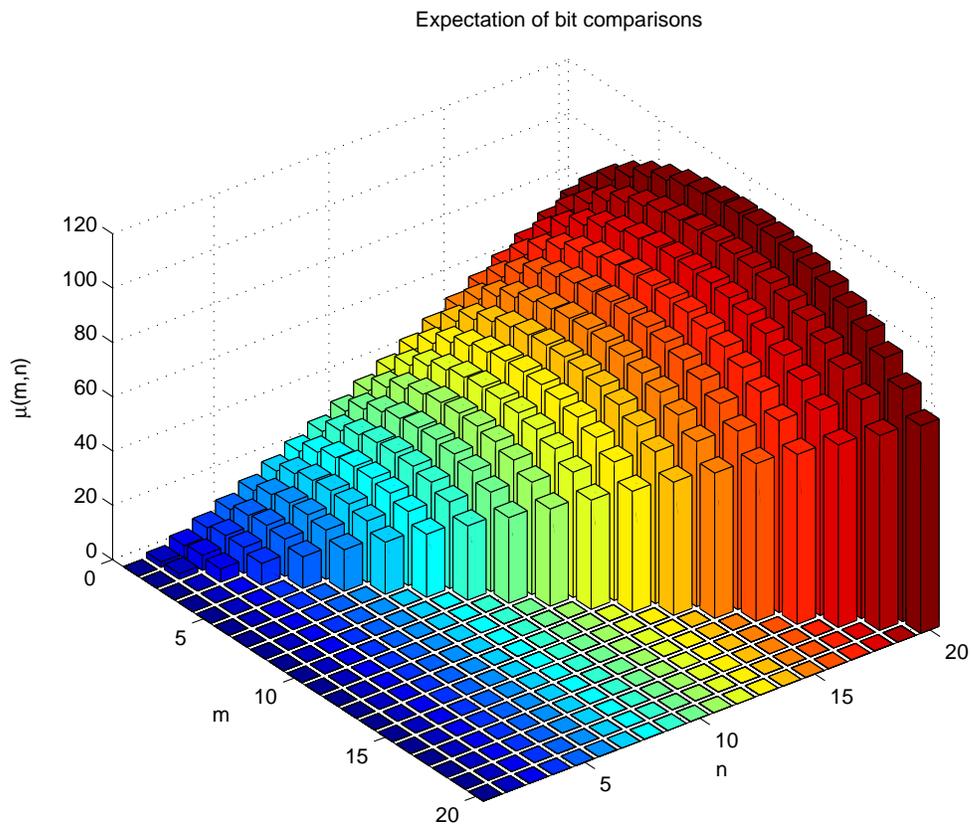}
\vspace{1.0cm} \caption[Expected number of bit comparisons for
{\tt Quickselect}] {Expected number of bit comparisons for {\tt Quickselect}.
The closed formulae for $\mu_1(m,n),\ \mu_2(m,n)$, and $\mu_3(m,n)$
were used to compute $\mu(m,n)$ for $n=1,2,\ldots,20$ ($n$
represents the number of keys) and $m=1,2,\ldots,n$ ($m$ represents
the rank of the target key). } \label{expectation}
\end{figure}

\section{Discussion}\label{discussion}
\setlength\parindent{.5in}
Our investigation of the bit complexity of {\tt Quickselect} revealed that
the expected number of bit comparisons required by {\tt Quickselect} to
find the smallest or largest key from a set of $n$ keys is
asymptotically linear in $n$ with the asymptotic slope approximately
equal to 5.27938. Hence asymptotically it differs from the expected
number of \emph{key} comparisons to achieve the same task only by a
constant factor. (The expectation for key comparisons is
asymptotically $2n$; see Knuth \cite{k1972} and Mahmoud \textit{et
al.} \cite{mms1995}). This result is rather contrastive to the
{\tt Quicksort} case in which the expected number of bit comparisons is
asymptotically $n(\ln n)(\lg n)$ whereas the expected number of key
comparisons is asymptotically $2n \ln n$ (see Fill and Janson
\cite{fj2004}).  Our analysis also showed that the expected number
of bit comparisons for the average case remains asymptotically
linear in $n$ with the lead-order coefficient approximately equal to
8.20731. Again, the expected number is asymptotically different from
that of key comparisons for the average case only by a constant
factor.  (The expected number of key comparisons for the average
case is asymptotically $3n$; see Mahmoud
\textit{et al.} \cite{mms1995}).\\
\indent Although we have yet to establish a formula analogous to
(\ref{eqnMu1}) and (\ref{eqnMuAvgFinal}) for the expected number of
bit comparisons to find the $m$-th key for fixed $m$, we established
an exact expression that only requires finite summation and used it
to obtain the results shown in Figure 1.  However, the formula
remains complex.  Written as a single expression, $\mu(m,n)$ is a
seven-fold sum of rather elementary terms with each sum having order
$n$ terms (in the worst case); in this sense, the running time of
the algorithm for computing $\mu(m,n)$ is of order $n^7$.  The
expression for $\mu(m,n)$ does not allow us to derive an asymptotic
formula for it or to prove the two intuitively obvious observations
described at the end of Section \ref{closedForm}. The situation is
substantially better for the expected number of \emph{key}
comparisons to find the $m$-th key from a set of $n$ keys; Knuth
\cite{k1972} showed that the expectation can be written
as $2[n+3+(n+1)H_n-(m+2)H_m-(n+3-m)H_{n+1-m}]$.\\
\indent In this paper, we considered independent and uniformly
distributed keys in (0,1). In this case, each bit in bit strings is
1 with probability 0.5. In our future research, we intend to
generalize the bit strings and consider each bit resulting from an
independent Bernoulli trial with parameter $p$.  This generalization
will further elucidate the bit complexity of {\tt Quickselect} and other
algorithms.\\

{\bf Acknowledgment.\ \ }We thank Philippe
Flajolet, Svante Janson, and Helmut Prodinger for helpful discussions.

\section{Appendix}\label{appendix}
\setlength\parindent{.5in}
In order to prove (\ref{complexIntegralAlongRVL}), it suffices to
show that, for any positive integer $m$,
\begin{eqnarray*}
\int_{n-\theta-i\infty}^{n-\theta+i\infty}\zeta(-1-s)\,m^{-s}\frac{ds}{(s+1)s\cdots
(s-n)} = 0
\end{eqnarray*}
(note that $n\geq 2$ and $0<\theta<1$). Letting $t:=-1-s$, it is
thus sufficient to show that
\begin{eqnarray*}
J:=\int_{-(n+1)+\theta-i\infty}^{-(n+1)+\theta+i\infty}\zeta(t)\,m^{t}\frac{dt}{t(t+1)\cdots
[t+(n+1)]} = 0.
\end{eqnarray*}
Using the residue theorem, we obtain
\begin{eqnarray}
J = - 2\pi i \left[\sum_{k=0}^{n}(-1)^k
\frac{\zeta(-k)\,m^{-k}}{k!(n+1-k)!}+\frac{m}{(n+2)!}\right] +
\int_{2-i\infty}^{2+i\infty}\zeta(t)\,m^t
\frac{dt}{t(t+1)\cdots[t+(n+1)]};\nonumber\\
\label{J}
\end{eqnarray}
The ``2'' in the second term here could just as well be any real
number exceeding 1. Here
\begin{eqnarray*}
\sum_{k=0}^{n}(-1)^k \frac{\zeta(-k)\,m^{-k}}{k!(n+1-k)!} =
-\frac{1}{2\,(n+1)!}+\sum_{k=1}^{n}
\frac{B_{k+1}\,m^{-k}}{(k+1)!(n+1-k)!}
=\sum_{k=1}^{n+1}
\frac{B_{k}\,m^{-(k-1)}}{k!(n+2-k)!}.
\end{eqnarray*}
Therefore
\begin{eqnarray}
\lefteqn{\sum_{k=0}^{n}(-1)^k
\frac{\zeta(-k)\,m^{-k}}{k!(n+1-k)!}+\frac{m}{(n+2)!}}\nonumber\\
&=& \frac{m^{-(n+1)}}{(n+1)!}\left[\sum_{k=1}^{n+1}
\frac{B_{k}\,(n+1)!}{k!(n+2-k)!}m^{n+2-k}+\frac{m^{n+2}}{n+2}\right]\nonumber\\
&=&\frac{m^{-(n+1)}}{(n+1)!}\sum_{k=1}^{m-1}k^{n+1}
=\frac{1}{(n+1)!}\sum_{k=1}^{m-1}\left(1-\frac{k}{m}\right)^{n+1};\label{partOfJ1}
\end{eqnarray}
for the second equality, see Knuth \cite{k2002v1} (Exercise
1.2.11.2-4). On the other hand, Flajolet \textit{et al.}
\cite{fgkpt1994} showed that
\begin{eqnarray}
& &\int_{2-i\infty}^{2+i\infty}\zeta(t)\,m^t
\frac{dt}{t(t+1)\cdots[t+(n+1)]} = \frac{2\pi
i}{(n+1)!}\sum_{k=1}^{m-1}\left(1-\frac{k}{m}\right)^{n+1}.\label{partOfJ2}
\end{eqnarray}
Thus it follows from (\ref{J})--(\ref{partOfJ2}) that $J=0$.

\bibliographystyle{plain}
\bibliography{references}
\end{document}